\newtheorem{defn}{Definition}[section]
\newtheorem{rem}[defn]{Remark}
\newtheorem{thm}[defn]{Theorem}
\newtheorem{lemma}[defn]{Lemma}
\newtheorem{prop}[defn]{Proposition}
\newtheorem{coro}[defn]{Corollary}
\newtheorem{ex}{Example}[section]
\newcommand\crule[3][black]{\textcolor{#1}{\rule{#2}{#3}}}
\newcommand{\ra}{\rightarrow}
\newcommand{\lra}{\longrightarrow}
\newcommand{\la}{\leftarrow}
\newcommand{\Lra}{\Longrightarrow}
\newcommand{\midsp}{\;|\;}
\newcommand{\telos}{\hfill$\Box$}
\newcommand{\type}[1]{{\tt #1}}
\newcommand{\val}[1]{\mbox{$[\![#1]\!]$}}
\newcommand{\forces}{\Vdash}
\newcommand{\dforces}{\forces^{\!\!\partial}}
\newcommand{\yvval}[1]{\mbox{$(\!|#1|\!) $}}
\newcommand{\proves}{\vdash}
\newcommand{\vmodels}{\mbox{$\medvert\!\!\!\!\approx\;$}}
\newcommand{\zmodels}{\mbox{$\medvert\!\!\!\!\eqsim\;$}}
\newcommand{\upv}{\upVdash}
\newcommand{\rperp}{\mbox{${}^{\upv}$}}
\newcommand{\gphi}{{\mathcal  G}(Y)}
\newcommand{\gpsi}{{\mathcal  G}(X)}
\newcommand{\bbox}{\blacksquare}
\newcommand{\lperp}{{}\rperp}
\newcommand{\ldminus}{\mbox{$\;-\!\!\!\!\!\largediamond\!\!\!\!\!-\;$}}
\newcommand{\ldvert}{\mbox{$\largediamond\!\!\!\!\!\!\hspace*{-0.5pt}\mid\;$ }}
\newcommand{\ldd}{\mbox{$\largediamond\hspace*{-10pt}\Diamond\;$}}
\newcommand{\ldra}{\mbox{$\largediamond\hspace*{-10pt}\mbox{$\ra$}\;$}}
\newcommand{\ldla}{\mbox{$\largediamond\hspace*{-12.5pt}\la\;$}}
\newcommand{\lbdiamond}{\raisebox{-2pt}{\mbox{\Huge {$\filleddiamond$}}}}
\newcommand{\blackdiamond}{\raisebox{-1.5pt}{\mbox{\LARGE {$\filleddiamond$}}}}
\newcommand{\lbbox}{\raisebox{-1.2pt}[0pt][0pt]{\crule[black]{0.27cm}{0.27cm}}\hspace*{1pt}}
\newcommand{\stx}[2]{\mbox{ST$_{#1}(#2)$}}
\newcommand{\sty}[2]{\mbox{ST$_{#1}(#2)$}}
\newcommand{\lfspoon}{\leftfilledspoon}
\newcommand{\rfspoon}{\rightfilledspoon}
\title{A Characterization Result\\ for Non-Distributive Logics}
\author{Chrysafis Hartonas
\\
University of  Thessaly, Greece
\\
$\type{hartonas@uth.gr}$}
\begin{document}
\maketitle

\begin{abstract}
Recent published work has addressed the Shalqvist correspondence problem for non-distributive logics. The natural question that arises is to identify  the fragment of first-order logic that corresponds to logics without distribution, lifting van Benthem's characterization result for modal logic to this new setting. Carrying out this project is  the contribution of the present article. 

The article is intended as a demonstration and application of a project of reduction of non-distributive logics to (sorted) residuated modal logics. The reduction  is an application of recent representation results by this author for normal lattice expansions  and a generalization of a canonical and fully abstract translation of the language of  substructural logics into the language of their companion sorted, residuated modal logics. The reduction of non-distributive logics to sorted modal logics makes the proof of a van Benthem characterization of non-distributive logics nearly effortless, by adapting and reusing existing results, demonstrating the usefulness and suitability of this approach in studying logics that may lack distribution.
\end{abstract}

\section{Introduction}
Results on the model theory of non-distributive logics are quite recent. They include published results of their Shalqvist theory \cite{suzuki-shalqvist,conradie-palmigiano},  studies of a Goldblatt-Thomason theorem \cite{goldblatt-morphisms2019} for the logics of normal lattice expansions, as well as modal translation semantics \cite{pll7,redm} for non-distributive logics.

We consider here the logics of normal lattice expansions, whose relational interpretation is over sorted frames (with sorts $1,\partial$) $\mathfrak{F}=(Z_1,Z_\partial,I,\ldots)$, $I\subseteq Z_1\times Z_\partial$, and we show that the fragment of first-order formulae in the sorted first-order language of the structures $\mathfrak{F}$ equivalent to a translation of a sentence of the language of our propositional logic consists of formulae $\Phi(u)$  that are  {\em stable}, meaning that $\Phi(u)\equiv\forall^\partial v\;({\bf I}(u,v)\lra\exists^1 z\;({\bf I}(z,v)\wedge \Phi(z)))$, and invariant under sorted bisimulations.
The result is a demonstration and application of a project of reduction of non-distributive logics to (sorted) residuated modal logics. The reduction  is an application of recent representation results \cite{duality2021,sdl-exp}  for normal lattice expansions, by this author,  and a generalization of a canonical and fully abstract translation \cite{redm} of the language of  substructural logics into the language of their companion sorted, residuated modal logics.

\section{Logics of Normal Lattice Expansions}
By a {\em distribution type} we mean an element $\delta$ of the set $\{1,\partial\}^{n+1}$, for some $n\geq 0$, typically to be written as $\delta=(i_1,\ldots,i_n;i_{n+1})$ and where $\delta_{(n+1)}=i_{n+1}\in\{1,\partial\}$ will be referred to as the {\em output type} of $\delta$.  A {\em similarity type} $\tau$ is then defined as a finite sequence of distribution types, $\tau=\langle\delta_1,\ldots,\delta_k\rangle$.

An $n$-ary lattice operator $f:{\mathcal L}^n\lra {\mathcal L}$ is called {\em additive} if it distributes over finite joins of $\mathcal L$ in each argument place. If ${\mathcal L}_1,\ldots,{\mathcal L}_n, {\mathcal L}$ are bounded lattices, then a  function $f:{\mathcal L}_1\times\cdots\times{\mathcal L}_n\lra{\mathcal L}$ is {\em additive}, if for each $i$, $f$ distributes over finite joins of ${\mathcal L}_i$.

We write $\mathcal L$ for ${\mathcal L}^1$  and ${\mathcal L}^\partial$ for its opposite lattice   (where order is reversed, often designated as ${\mathcal L}^{op}$).

An $n$-ary operator $f$ on a lattice $\mathcal L$ is {\em normal} if it is an additive function $f:{\mathcal L}^{i_1}\times\cdots\times{\mathcal L}^{i_n}\lra{\mathcal L}^{i_{n+1}}$, where  each $i_j$, for  $j=1,\ldots,n,n+1$,   is in the set $\{1,\partial\}$, i.e. ${\mathcal L}^{i_j}$ is either $\mathcal L$, or ${\mathcal L}^\partial$. For a normal operator $f$ on $\mathcal L$, its {\em distribution type} is the $(n+1)$-tuple $\delta(f)=(i_1,\ldots,i_n;i_{n+1})$.

\begin{defn}[Lattice Expansions]\rm
A {\em  normal lattice expansion}  is a structure ${\mathcal L}=(L,\wedge,\vee,0,1,(f_i)_{i\in k})$ where $k>0$ is a natural number  and for each $i\in k$, $\;f_i$ is a normal operator on $\mathcal L$ of some specified arity $\alpha(f_i)\in\mathbb{N}^+$ and distribution type $\delta(i)$. The {\em similarity type} of $\mathcal L$ is the $k$-tuple $\tau({\mathcal L})=\langle\delta(0),\ldots,\delta(k-1)\rangle$.
\end{defn}

\begin{ex}\rm
A bounded lattice with a box and a diamond operator ${\mathcal L}=(L,\leq,\wedge,\vee,0,1,\boxminus,\diamondvert)$ is a normal lattice expansion of similarity type $\tau$, where  $\tau=\langle (1;1),(\partial,\partial)\rangle$ where $\delta(\diamondvert)=(1;1)$, i.e. $\diamondvert:{\mathcal L}\lra{\mathcal L}$ distributes over joins of $\mathcal L$, while $\delta(\boxminus)=(\partial;\partial)$, i.e. $\boxminus:{\mathcal L}^\partial\lra{\mathcal L}^\partial$ distributes over ``joins'' of ${\mathcal L}^\partial$ (i.e. meets of $\mathcal L$), delivering ``joins'' of ${\mathcal L}^\partial$ (i.e. meets of $\mathcal L$).

Similarly for an implicative lattice, of similarity type $\tau'=\langle(1,\partial;\partial)\rangle$ and where $(1,\partial;\partial)=\delta(\ra)$  is the distribution type of the implication operator, regarded as a map $\ra\;:{\mathcal L}\times{\mathcal L}^\partial\lra{\mathcal L}^\partial$ distributing over ``joins'' in each argument place, i.e. co-distributing over joins in the first place, turning them to meets, and distributing over meets (joins of ${\mathcal L}^\partial$) in the second place, delivering ``joins'' of ${\mathcal L}^\partial$, i.e. meets of $\mathcal L$.

An {\bf FL}-algebra (Full Lambek algebra \cite{ono3}) is a normal lattice expansion with similarity type $\tau''=\langle(1,1;1),(1,\partial;\partial),(\partial,1;\partial)\rangle$. In other words, it is a residuated lattice ${\mathcal L}=(L,\leq,\wedge,\vee,0,1,\la,\circ,\ra)$, with $\delta(\la)=(\partial,1;\partial), \delta(\circ)=(1,1;1)$ and $\delta(\ra)=(1,\partial;\partial)$.\telos
\end{ex}

Let $\mathcal{L}=(L,\leq,\wedge,\vee,0,1,\overt,\ominus)$ be a lattice expansion, where \mbox{$(L,\leq,\wedge,\vee,0,1)$} is a bounded lattice which may not be distributive, each of $\overt, \ominus$ is normal and with respective output types 1 and $\partial$. Let $\tau=\langle(i_1,\ldots,i_n;1),(i'_1,\ldots,i'_n;\partial))$ be the similarity type of $\mathcal{L}$.  The language $\Lambda_\tau$ of the lattice expansion is displayed below.
\[
\Lambda_\tau\ni\varphi:= p_i\;(i\in \mathbb{N})\midsp\top\midsp\bot\midsp\varphi\wedge\varphi\midsp \varphi\vee\varphi\midsp\overt(\overline{\varphi})\midsp\ominus(\overline{\varphi})
\]
The minimal axiomatization of the logic adds to axioms and rules for Positive Lattice Logic (the logic of bounded lattices) the normality axioms (distribution axioms) determined by the distribution types of the operators.

Normal lattice expansions are the algebraic models of non-distributive logics. Residuated (and co-residuated) lattices, in particular, have been extensively investigated, as the algebraic models of substructural logics. Consult \cite{ono-galatos} for a comprehensive presentation and literature review.

The relational (Kripke) semantics for the logics of normal lattice expansions use frames $(A,I,B,\ldots)$ where $A,B$ are sets and $I\subseteq A\times B$. Sorted relational semantic frameworks for substructural and non-distributive, more generally, logics have been proposed by Suzuki \cite{suzuki8,Suzuki-polarity-frames} and Gehrke and co-workers \cite{mai-gen,mai-grishin,dunn-gehrke}. In both cases the semantics is based on sorted representation theorems for lattices \cite{sdl,hartung}. Single-sorted approaches have been also studied \cite{vac-et-al,vak-ewa,craig}, based on a different representation \cite{urq,plo}. More recently, a representation and Stone type duality result for normal lattice expansions was presented by this author \cite{duality2021} (an improvement over the duality of \cite{sdl-exp} by the same author), extending the lattice representation of \cite{sdl}, with applications to specific cases of interest in \cite{pnsds,discres}. Structures $\mathfrak{F}=(A,\upv,B), \upv\;\subseteq A\times B$ have been introduced (named `polarities') and studied by Birkhoff \cite{birkhoff} and subsequently formed the basic structures of Formal Concept Analysis (FCA) \cite{wille2} where they are called `formal contexts'. The dual structure $\mathfrak{F}^+$ of a formal context $\mathfrak{F}$ is its `formal concept lattice', a complete lattice of `formal concepts' $(C,D)$ where $C\subseteq A$ with $C=\lperp(C\rperp)$ (a Galois stable set) and $D=C\rperp$ (a Galois co-stable set), hence also $C=\lperp D$, and where $(\;)\rperp:\powerset(A)\leftrightarrows\powerset(B):\lperp(\;)$ is the Galois connection generated by the relation $\upv$.
\begin{eqnarray*}
  C\rperp &=& \{d\in D\midsp\forall c\in C\;c\upv d\}=\{d\in D\midsp C\upv d\} \\
  \lperp D &=& \{c\in C\midsp\forall d\in D c\upv d\}=\{c\in C\midsp c\upv D\}
\end{eqnarray*}
We let $\gpsi,\gphi$ designate the complete lattices of Galois stable and co-stable sets, respectively.

Every complete lattice $\mathcal{C}$ can be represented as the formal concept lattice of the context $(\mathcal{C},\leq,\mathcal{C})$ and it was further shown in \cite{hartung} (following the FCA approach and building on Urquhart's \cite{urq}) and in \cite{sdl} (building on Goldblatt's representation of ortholattices \cite{goldb}) that every lattice can be represented as a sublattice of the formal concept lattice of a suitable formal context. This was generalized in \cite{duality2021,sdl-exp} to the case of normal lattice expansions, using sorted frames with additional relations $\mathfrak{F=}(A,I,B,(R_t)_{t\in T})$. By the similarity type of a sorted frame $\mathfrak{F}$ we shall mean the tuple $\langle\sigma(R_k)\rangle_{k\in K}$.

The relational semantics based on \cite{duality2021,sdl-exp} associates to every distribution type $\delta=(i_1,\ldots,i_n;i_{n+1})$ a sorted relation $R\subseteq Z^{i_{n+1}}\times\prod_{j=1}^{j=n}Z^{i_j}$ on the frame $(A,I,B)$, of {\em sorting type} $\sigma(R)=(i_{n+1};i_1\cdots i_n)$, where $Z^{i_j}$ is $A$, if $i_j=1$ and it is $B$ when $i_j=\partial$. Hence, to an algebra of similarity type $\tau$, a frame of the same similarity type is associated for the interpretation of the language $\Lambda_\tau$. Set operators are then canonically extracted from the relation (cf \cite{duality2021,sdl-exp,pnsds,discres} for details).  The representation is uniform and all cases reduce to the cases of relations of sorting types $(1;i_1\cdots i_n)$ and $(\partial;i'_1\cdots i'_n)$, corresponding to normal lattice operators that take their values in the lattice $\mathcal{L}$, or in its opposite lattice $\mathcal{L}^\partial$. Hence we will be only considering sorted structures $(A,I,B,R,S)$ with relations $I\subseteq A\times B$, $R\subseteq A\times\prod_{j=1}^{j=n}Z^{i_j}$ and $S\subseteq B\times\prod_{j=1}^{j=n}Z^{i'_j}$, nothing depending on having more than one relation of each sorting type, or on having relations of different arities. The Galois dual relations $R', S'$ of $R,S$  are defined by setting $R'u_1\cdots u_n=(Ru_1\cdots u_n)\rperp$ and, similarly, $S'v_1\cdots v_n=\lperp(Sv_1\cdots v_n)$. By a {\em section} of an $(n+1)$-ary relation we mean the set obtained by leaving one argument place unfilled.

\begin{prop}\rm
\label{dist from stability}
Let $\alpha_R,\alpha_S$ be the classical (but sorted) image operators generated by the relations $R,S$
\[
\alpha_R(W_1,\ldots,W_n)=\{u\midsp \exists w_1\cdots w_n(uRw_1\cdots w_n\;\wedge\;\bigwedge_j\;(w_j\in W_j))\}=\bigcup_{w_j\in W_j}^{j=1,\ldots,n}Rw_1\cdots w_n
\]
and similarly for $\alpha_S$ and let $\overline{\alpha}_R,\overline{\alpha}_S$ be the Galois closure of the restriction of $\alpha_R,\alpha_S$ to Galois stable, or co-stable sets, according to the sort type of the relations. If every section of the Galois dual relations $R',S'$ of $R,S$ is a Galois stable (or co-stable, according to the sort type) set, then $\overline{\alpha}_R,\overline{\alpha}_S$ distribute over arbitrary joins in each argument place.
\end{prop}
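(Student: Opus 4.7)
The plan is to prove distribution over joins one argument place at a time, and only for $\overline{\alpha}_R$, since $\overline{\alpha}_S$ is handled dually. Fix a coordinate $k\in\{1,\ldots,n\}$ and sets $W_j$ in the remaining argument places, and write $\alpha_R^k(W)=\alpha_R(W_1,\ldots,W,\ldots,W_n)$ with $W$ sitting in the $k$-th place. Two basic observations set the stage: (i) as a classical set operator, $\alpha_R^k$ already distributes over arbitrary unions, directly from the defining formula $\alpha_R(W_1,\ldots,W_n)=\bigcup_{w_j\in W_j}Rw_1\cdots w_n$; and (ii) joins in the lattice of Galois stable (respectively co-stable) sets are computed as $\bigvee_i C_i=\Gamma(\bigcup_i C_i)$, where $\Gamma$ denotes the Galois closure on the appropriate sort.

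Given these, the distribution claim for $\overline{\alpha}_R$ reduces to the key lemma that $\alpha_R^k(\Gamma W)\subseteq\Gamma\alpha_R^k(W)$ for every $W\subseteq Z^{i_k}$. Granting the lemma, monotonicity gives the reverse inclusion after closing and hence $\Gamma\alpha_R^k(\Gamma W)=\Gamma\alpha_R^k(W)$; substituting $W=\bigcup_i C_i$ and combining with classical distribution over unions yields
\[
\overline{\alpha}_R^k\bigl(\bigvee_i C_i\bigr) \;=\; \Gamma\alpha_R^k\bigl(\Gamma(\bigcup_i C_i)\bigr) \;=\; \Gamma\alpha_R^k\bigl(\bigcup_i C_i\bigr) \;=\; \Gamma\bigl(\bigcup_i\alpha_R^k(C_i)\bigr) \;=\; \bigvee_i\overline{\alpha}_R^k(C_i).
\]

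The heart of the argument is therefore the lemma, and this is where the section hypothesis enters. Take $u\in\alpha_R^k(\Gamma W)$ witnessed by some $w_k\in\Gamma W$ and $w_j\in W_j$ ($j\neq k$) with $uRw_1\cdots w_n$, and pick an arbitrary $v\in\alpha_R^k(W)\rperp$; it suffices to derive $uIv$. Unpacking $v\in\alpha_R^k(W)\rperp$ through the definition $R'w_1'\cdots w_n'=(Rw_1'\cdots w_n')\rperp$ shows that for every $w\in W$ and every choice of $w_j'\in W_j$ ($j\neq k$), $v\in R'(w_1',\ldots,w,\ldots,w_n')$. In other words, $W$ is contained in the set
\[
P_v \;=\; \bigcap_{w_j'\in W_j,\, j\neq k} \{w\in Z^{i_k}\midsp v\in R'(w_1',\ldots,w,\ldots,w_n')\},
\]
which is an intersection of $k$-th sections of $R'$. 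By hypothesis each such section is Galois stable or co-stable (according as $i_k=1$ or $\partial$), and such families are closed under arbitrary intersections, so $P_v$ is itself closed. From $W\subseteq P_v$ we conclude $\Gamma W\subseteq P_v$, hence $w_k\in P_v$; applied to the fixed $w_j$'s this is exactly $v\in R'(w_1,\ldots,w_n)$, which, together with $uRw_1\cdots w_n$, gives $uIv$.

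The main obstacle is precisely identifying the right test set $P_v$ and recognising that sectional stability is exactly what is needed to keep $\Gamma W$ inside it; once this is in place the rest is routine Galois-closure algebra. The parallel statement for $\overline{\alpha}_S$ follows by the dual argument, interchanging the roles of $A$ and $B$, of $\rperp$ and $\lperp$, and of ``stable'' and ``co-stable''.
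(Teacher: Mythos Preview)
Your argument is correct and self-contained. The paper itself does not give a proof of this proposition: it attributes the idea to Gehrke \cite{mai-gen} and Goldblatt \cite{goldblatt-morphisms2019} for restricted distribution types and refers the reader to \cite{kata2z} for the general case. Your route---reduce to the one-variable lemma $\alpha_R^k(\Gamma W)\subseteq\Gamma\alpha_R^k(W)$ and prove it by showing that the test set $P_v$ is an intersection of $k$-sections of $R'$, hence closed by the hypothesis, hence absorbs $\Gamma W$---is the standard argument one would expect behind those citations, and your closure-algebra bookkeeping (deriving $\Gamma\alpha_R^k(\Gamma W)=\Gamma\alpha_R^k(W)$ and then chaining through $\bigvee_i C_i=\Gamma(\bigcup_i C_i)$) is clean and correct.

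One notational slip worth fixing: where you write ``it suffices to derive $uIv$'', you want $u\upv v$. In the paper's conventions $\rperp$ and $\lperp$ are defined from the Galois relation $\upv$, while $I$ is later taken to be its \emph{complement}. Your final step, $v\in R'(w_1,\ldots,w_n)=(Rw_1\cdots w_n)\rperp$ together with $u\in Rw_1\cdots w_n$, yields $u\upv v$, which is exactly what places $u$ in $\lperp(\alpha_R^k(W)\rperp)=\Gamma\alpha_R^k(W)$.
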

\begin{proof}
  Section stability was first invoked by Gehrke in \cite{mai-gen} in modeling the implication-fusion fragment of the Lambek calculus, where operators were generated by relations which are in effect the Galois dual relations $R',S'$ of $R,S$. The argument was generalized by Goldblatt \cite{goldblatt-morphisms2019} to the case of operators that either distribute in each argument place over arbitrary joins of Galois stable sets, returning a join, or they distribute over arbitrary meets of Galois stable sets, returning a meet. The argument can be generalized to that of operators of an arbitrary distribution type. The proof was given in \cite{kata2z}, to which we refer the reader for details.
\end{proof}
The operator $\overline{\alpha}_R$ (similarly for $\overline{\alpha}_S$) is  sorted  and its sorting is inherited from the sort type of $R$. For example, if $\sigma(R)=(\partial;11)$,  $\alpha_R:\powerset(X)\times\powerset(X)\lra\powerset(Y)$, hence $\overline{\alpha}_R:\gpsi\times\gpsi\lra\gphi$.
Single sorted operations
\[
\mbox{$\overline{\alpha}^1_R:\gpsi\times\gpsi\lra\gpsi$ and $\overline{\alpha}^\partial_R:\gphi\times\gphi\lra\gphi$}
\]
can be then extracted by composing appropriately with the Galois connection: $\overline{\alpha}^1_R(F,C)=(\overline{\alpha}_R(F,C))'$ (where $F,C\in\gpsi$) and, similarly, $\overline{\alpha}^\partial_R(G,D)=\overline{\alpha}_R(G',D')$ (where $G,D\in\gphi$). Similarly for the $n$-ary case and for an arbitrary distribution type.

\begin{rem}\rm\label{canonical representation remark}
A lattice operator $\phi$ of distribution type $\delta(\phi)=(i_1,\ldots,i_n;i_{n+1})$ is canonically represented in \cite{duality2021,sdl-exp} as the operator $\overline{\alpha}^1_R$, where the relation $R$, of sort type $\sigma(R)=(i_{n+1};i_1\cdots i_n)$, is defined classically by the condition
\[
uRw_1\cdots w_n\;\mbox{ iff }\forall a_1\cdots a_n\left(\bigwedge_{j=1}^{j=n}\!(a_j\in w_j)\lra \phi(a_1,\ldots,a_n)\in u\right)
\]
and it can be shown (cf. \cite{kata2z,duality2021}) that the section stability requirement of Proposition \ref{dist from stability} holds in the canonical frame construction.
\end{rem}

Frames $\mathfrak{F}=(A,I,B,R,S)$ of similarity type $\tau$ are considered for the relational semantics of the language $\Lambda_\tau$ and models $\mathfrak{M}=(\mathfrak{F},V)$ are equipped with an interpretation that assigns a Galois-stable set $V(p_i)$  to each propositional variable. A co-interpretation $V\rperp$ is also defined, by setting $V\rperp(p_i)=V(p_i)\rperp$. Interpretation and co-interpretation are extended to all sentences and we write $\val{\varphi}\subseteq A,\yvval{\varphi}\subseteq B$, respectively, so that $\yvval{\varphi}=\val{\varphi}\rperp$. Equivalently, we may say that each sentence $\varphi$ is interpreted as a formal concept $(\val{\varphi},\yvval{\varphi})$ in the formal concept lattice of the frame. The satisfaction and co-satisfaction (refutation) relations $\forces\;\subseteq A\times\Lambda,\dforces\;\subseteq B\times\Lambda$ are defined as usual, $a\forces\varphi$ iff $a\in\val{\varphi}$, $b\dforces\varphi$ iff $b\in\yvval{\varphi}$. Since interpretation and co-interpretation determine each other, it suffices to provide for each logical operator the clause for either $\forces$, or $\dforces$, as we do in Table \ref{sat}. The relations $R', S'$ in Table \ref{sat} are the Galois dual relations of $R,S$ and we recall that they are defined by setting $R'u_1\cdots u_n=(Ru_1\cdots u_n)\rperp$ and, similarly, $S'v_1\cdots v_n=\lperp(Sv_1\cdots v_n)$.

\begin{table}[!htbp]
\caption{(Co)Satisfaction relations}
\label{sat}
\begin{tabbing}
$a\forces p_i$\hskip8mm\=iff\hskip3mm\= $a\in V(p_i)$\\
$a\forces\top$ \>iff\> $a=a$\\
$b\dforces\bot$\>iff\> $b=b$\\
$a\forces\varphi\wedge\psi$\>iff\> $a\forces\varphi$ and $a\forces\psi$\\
$b\dforces\varphi\vee\psi$\>iff\> $b\dforces\varphi$ and $b\dforces\psi$\\
$b\dforces \overt(\varphi_1,\ldots,\varphi_n)$\\
\>iff\> $\forall u_1\cdots u_n\;(\bigwedge_j^{i_j=1}(u_j\forces\varphi_j)\wedge\bigwedge_r^{i_r=\partial}(u_r\dforces\varphi_r)\lra bR' u_1\cdots u_n)$\\
$a\forces\ominus(\varphi_1,\ldots,\varphi_n)$\\
\>iff\> $\forall v_1\cdots v_n\;(\bigwedge_j^{i_j=1}(v_j\forces\varphi_j)\wedge\bigwedge_r^{i_r=\partial}(v_r\dforces\varphi_r)\lra aS'v_1\cdots v_n)$
\end{tabbing}
\end{table}

Soundness of the logics of normal lattice expansions is proven in the class of frames where the relations $R,S$ satisfy the section stability requirement of Proposition \ref{dist from stability}. Completeness is shown by applying the representation arguments of \cite{duality2021,sdl-exp}, see Remark \ref{canonical representation remark}.

\begin{ex}\rm
If $\overt=\circ$ (the fusion (cotenability) operator) and $\ominus=\ra$ (implication), of respective distribution types $(1,1;1), (1,\partial;\partial)$, the semantic clauses  run as follows
\begin{tabbing}
\hskip5mm\=$b\dforces\varphi\circ\psi$\hskip3mm\=iff\hskip3mm\= $\forall a,c\in A\;(c\forces\varphi\;\wedge\;a\forces\psi\;\lra\;bR'ca)$\\
\>$a\forces\varphi\ra\psi$ \>iff\> $\forall c\in A\;\forall b\in B\;(c\forces\varphi\;\wedge\;b\dforces\psi\;\lra\;bS'ca)$
\end{tabbing}
where suitable conditions on $R,S$ ensure residuation of the operators (cf \cite{discres} for details).

For another example, consider the case of modal operators $\ominus=\boxminus,\overt=\diamondvert$, of respective distribution types $(\partial;\partial)$ and $(1;1)$. The semantic clauses run as follows, after some logical manipulation of the respective clauses in Table \ref{sat}, with $\diamondvert$ dually interpreted as necessity (cf \cite{pnsds,odigpl} for details),
\begin{tabbing}
\hskip5mm\=$b\dforces \diamondvert\varphi$ \hskip3mm\=iff\hskip3mm\= $\forall d\in B\;(bR''d\lra d\dforces\varphi)$\\
\>$a\forces \boxminus\varphi$ \>iff\> $\forall c\in A\;(aS''c\lra c\forces\varphi)$
\end{tabbing}
where we define $bR''=\lperp(bR')$ (and recall that $R'$ is defined from $R\subseteq A\times A$ by setting $R'c=(Rc)\rperp$) and similarly $S''$ is defined from $S\subseteq B\times B$ by first letting $S'b=\lperp(Sb)$, then defining $aS''=\lperp(aS')$.\telos
\end{ex}

A translation of the language $\Lambda_\tau$ into a (necessarily sorted) first-order language needs to take into account the fact that propositional variables are interpreted as Galois stable sets $C=\lperp(C\rperp)$, hence merely introducing a unary predicate {\bf P}$_i$ for each propositional variables $p_i$ falls short of the goal. The obstacle can be sidestepped by observing that the complement $I$ of the relation $\upv$ generates a pair of residuated operators $\largediamond:\powerset(A)\leftrightarrows\powerset(B):\lbbox$ such that the generated (by composition) closure operators coincide with those generated by the Galois connection, i.e. $\lperp(U\rperp)=\lbbox\largediamond U$ and $(\lperp V)\rperp=\largesquare\lbdiamond V$, for $U\subseteq  A, V\subseteq B$ and where $\largesquare=-\largediamond -$ and $\lbdiamond=-\lbbox-$. This leads to considering the sorted modal logic of polarities with relations, a project initiated in \cite{pll7,redm}.

\section{Sorted Modal Logics of Polarities with Relations}
\subsection{Sorted Residuated Modal Logic}
Fix any $\tau$-structure (a structure of similarity type $\tau$) $\mathfrak{F}=(A,B,I,R,S)$, where $I\subseteq A\times B$, and $R,S$ are $(n+1)$-ary relations of respective sorting types $\sigma(R)=(1;i_1,\ldots,i_n)$ and $\sigma(S)=(\partial;i'_1,\ldots,i'_n)$, i.e. $R\subseteq A\times\prod_{j=1}^{j=n}Z_{i_j}$ and $S\subseteq B\times\prod_{j=1}^{j=n}Z_{i'_j}$, where $Z_1=A$ and $Z_\partial=B$. The relation $I$ generates residuated operators $\largediamond:\powerset(A)\leftrightarrows\powerset(B):\lbbox$
\begin{eqnarray}
\largediamond U&=&\{b\in B\midsp\exists a\in A\;(aI b\wedge a\in U)\}\nonumber \\
\lbbox V&=&\{a\in A\midsp \forall b\in B\;(aI b\lra b\in V)\}\label{resid}
\end{eqnarray}
and each of $R,S$ generates a sorted image operator in the sense of \cite{jt1,jt2}
\begin{eqnarray}
\ldvert(U_1,\ldots,U_n) &=& \{a\in A\midsp\exists u_1\cdots u_n\;(aRu_1\cdots u_n\wedge\bigwedge_j (u_j\in U_j))\}\\
\ldminus(V_1,\ldots,V_n) &=& \{b\in B\midsp\exists v_1\cdots v_n\;(bSv_1\cdots v_n\wedge\bigwedge_j (v_j\in V_j))\}\label{image ops}
\end{eqnarray}
where for each $j=1,\ldots,n$, $U_j\subseteq Z_{i_j}\in\{A,B\}$ and $V_j\subseteq Z_{i'_j}\in\{A,B\}$.

To the structure (frame) $\mathfrak{F}=(A,B,I,R,S)$ with sorting types of $R,S$ as above, we may associate a residuated sorted modal logic with residuated modal operators $\Diamond,\bbox$ and sorted polyadic diamonds $\diamondvert,\diamondminus$ of sorting types determined by the sorting types of the relations. The language $L=(L_1,L_\partial)$ of sorted, residuated modal logic, given countable, nonempty and disjoint sets of propositional variables, is defined as follows
\begin{eqnarray*}
L_1\ni\alpha, \zeta, \eta &:=& P_i\;(i\in\mathbb{N})\midsp\neg\alpha\midsp\alpha\ra\alpha\midsp\bbox\beta\midsp\diamondvert(\overline{\theta})\\
L_\partial\ni\beta,\delta,\xi &:=& Q_i\;(i\in\mathbb{N})\midsp\neg\beta\midsp\beta\ra\beta\midsp\Box\alpha\midsp\diamondminus(\overline{\theta'})
\end{eqnarray*}
where $\overline{\theta}=(\theta_1,\ldots,\theta_n)$, the sorting type of $\diamondvert$ is $\sigma(\diamondvert)=(i_1,\ldots,i_n;1)$ and if $\sigma(j)=1$, then $\theta_j\in L_1$, else $\theta_j\in L_\partial$. Similarly for $\diamondminus$, of sorting type $\sigma'=(i'_1,\ldots,i'_n;\partial)$. Nothing of significance for our purposes is obtained by proliferating diamonds (and frame relations) and considering indexed families $(\diamondvert_j)_{j\in J}, (\diamondminus_k)_{k\in K}$ of each. Note that $\sigma(\bbox)=(\partial;1)$ and $\sigma(\Box)=(1;\partial)$. The same symbols are used for negation and implication in the two sorts and we rely on context to disambiguate. Diamond operators $\Diamond=\neg\Box\neg$ and $\blackdiamond=\neg\bbox\neg$ are defined as usual, except that the two occurrences of negation in each definition are of different sort. Sorted box operators $\boxvert, \boxminus$ are defined accordingly from $\diamondvert, \diamondminus$ and negation. Disjunction and conjunction for each sort is defined in the classical way. We let $\top,\bot\in L_1$ and $\type{t},\type{f}\in L_\partial$ designate the (definable) true and false constants for each sort. The operators ${}^\bot(\;),(\;)^\bot$ are defined by ${}^\bot\beta=\bbox(\neg\beta)$ and $\alpha^\bot=\Box(\neg\alpha)$.

\begin{rem}\rm
The sorted residuated companion modal logic of the logic of a normal lattice expansion is determined by the similarity type of the expansion. As an example, consider {\bf FL}, the associative Lambek calculus, with algebraic semantics in residuated lattices $\mathcal{L}=(L,\leq,\wedge,\vee,0,1,\la,\circ,\ra)$, of similarity type $\tau=\langle(1,1;1),(1,\partial;\partial),(\partial,1;\partial)\rangle$, where $\delta(\la)=(\partial,1;\partial)$, $\delta(\circ)=(1,1;1)$ and $\delta(\ra)=(1,\partial;\partial)$. Its companion modal logic includes three diamond operators $\ldla,\ldd,\ldra$ of respective sorting type $(\partial,1;\partial)$, $(1,1;1)$ and $(1,\partial;\partial)$. An implication operator of the first sort is defined by setting $\alpha\rfspoon\eta$  $={}^\perp(\alpha\ldra\eta^\perp)$ (reminiscent of the classical definition of implication as \mbox{$\varphi\ra\psi$}  $= \neg(\varphi\wedge\neg\psi)$) and similarly for $\lfspoon$. The two languages are interpreted over the same class of frames, determined by the similarity type at hand. Frames $\mathfrak{F}=(A,I,B,L,F,R)$ include ternary relations of respective sorting types $\sigma(L)=(\partial;\partial 1)$, $\sigma(F)=(1;11)$ and $\sigma(R)=(\partial;1\partial)$, in other words $L\subseteq B\times (B\times A)$, $F\subseteq A\times(A\times A)$, while $R\subseteq A\times(A\times B)$. Operators are generated on both arbitrary (as classical image operators) and (co)stable subsets (as closures of suitable compositions of the image operators with the Galois connection of the frame), as detailed in \cite{sdl-exp}. Appropriate frame conditions ensure that residuation obtains, as detailed in \cite{discres,redm}. In this particular case, two of the relations can be dispensed with, as they are definable in terms of the third and the Galois connection (cf \cite{discres,redm} for details).
\end{rem}

Given a sorted frame $\mathfrak{F}=(A,I,B,R,S)$ as above, a model $\mathfrak{M}=(\mathfrak{F},V)$ on the frame $\mathfrak{F}$ is equipped with a sorted valuation function $V$ such that $V(P_i)\subseteq A$ and $V(Q_i)\subseteq B$. The sorted modal language is interpreted in the expected way, as in Table \ref{sorted int}, where we use $\models\;\subseteq A\times L_1$ and $\vmodels\subseteq B\times L_\partial$ for the two satisfaction relations and, to simplify notation, we let $\zmodels$ be either $\models$ or $\vmodels$, as appropriate for the sorting type at hand. Furthermore, we let $\val{\alpha}_\mathfrak{M}\subseteq A$ and $\yvval{\beta}_\mathfrak{M}\subseteq B$ be the generated interpretations of the sorted modal formulae of the first and second sort, respectively.
{\small
\begin{table}[h]
\caption{Sorted Interpretation}
\label{sorted int}
\begin{tabbing}
$a\models P_i$\hskip6mm\= iff\hskip3mm\= $a\in V(P_i)$\hskip3cm \= $b\vmodels Q_i$\hskip6mm\= iff\hskip3mm\= $b\in V(Q_i)$\\
$a\models\neg\alpha$\>iff\> $a\not\models\alpha$\>  $b\vmodels\neg\beta$\>iff\> $b\not\vmodels\beta$\\
$a\models\alpha\wedge\eta$ \>iff\> $a\models\alpha$ and $a\models\eta$ \> $b\vmodels\beta\wedge\delta$ \>iff\> $b\vmodels\beta$ and $b\vmodels\delta$\\
$a\models\bbox\beta$ \>iff\> $\forall b\;(aIb$ implies $b\vmodels\beta)$ \>
$b\vmodels\Box\alpha$ \>iff\> $\forall a\;(aIb$ implies $a\models\alpha)$\\[2mm]
$a\models\diamondvert(\overline{\theta})$ \>iff\> $\exists w_1,\ldots,w_n\;(aRw_1\cdots w_n$ and $\bigwedge_{j=1}^n(w_j\zmodels\theta_j))$\\[2mm]
$b\vmodels\diamondminus(\overline{\theta})$ \>iff\> $\exists w_1,\ldots,w_n\;(bSw_1\cdots w_n$ and $\bigwedge_{j=1}^n(w_j\zmodels\theta_j))$
\end{tabbing}
\end{table}
}

Let $\mathbb{F}$ be a class of sorted frames and $\mathbb{M}$ the class of models $\mathfrak{M}=(\mathfrak{F},V)$ over frames $\mathfrak{F}\in\mathbb{F}$. The following notions have a standard definition just as in the single sorted case. Consult \cite{modlog} for details.
\begin{itemize}
\item $\alpha$ (or $\beta$) is {\em locally true}, or {\em satisfiable} in $\mathfrak{F}=(A,I,B,R,S)$
\item $\alpha$ (or $\beta$) is {\em globally true} in $\mathfrak{F}$
\item $\alpha$ (or $\beta$) is {\em valid in the class} $\mathbb{F}$ of frames
\item A set $\Sigma$ of sentences (perhaps of both sorts) {\em defines} the class $\mathbb{F}$ of frames.
\end{itemize}

The weakest sorted normal modal logic is an extension of a {\bf KB} type of system, where the B-axioms (one for each sort) are $\alpha\ra\bbox\!\Diamond\!\alpha$ and, for the second sort, $\beta\ra\Box\blackdiamond\beta$. In addition, it includes normality axioms for the polyadic sorted diamonds $\diamondvert,\diamondminus$. Note that in the K-axiom (one for each sort) implication on both sorts is involved, witness $\Box(\alpha\ra\eta)\ra(\Box\alpha\ra\Box\eta)$. We shall refer to it as the logic 2{\bf KB}. For frames satisfying the seriality conditions
\begin{equation}
\label{D-cond}
\forall a\in A\;\exists b\in B\;aIb\hskip1.5cm \forall b\in B\;\exists a\in A\; aIb
\end{equation}
the D-axioms $\bbox\beta\ra\blackdiamond\beta$ and $\Box\alpha\ra\Diamond\alpha$  are valid. We shall refer to the corresponding system as 2{\bf KDB}.

\subsection{Translation Semantics for Logics of Normal Lattice Expansions}
In \cite{redm}, a translation of the language  of substructural logics (in the language on the signature $\{\wedge,\vee,\top,\bot,\la,\circ,\ra\}$) was introduced, proven to be full and faithful (fully abstract). To prove a characterization theorem, generalizing the van Benthem result for modal logic, we define a family of translations, parameterized on permutations of natural numbers, given an enumeration of modal sentences of the second sort.

Let $\beta_0,\beta_1,\ldots$ be an enumeration of modal sentences of the second sort and $\pi:\omega\ra\omega$ a permutation of  natural numbers. The translation T$^\bullet_\pi$  and co-translation T$^\circ_\pi$ of sentences of the language $\Lambda_\tau$ are defined as in Table \ref{trans}.

\begin{table}[!htbp]
\caption{Modal translation and co-translation}
\label{trans}
{\small
\begin{tabbing}
T$^\bullet_\pi(p_i)$\hskip0.6cm\==\hskip1mm\= $\bbox\beta_{\pi(i)}$\hskip3.5cm \=  T$^\circ_\pi(p_i)$\hskip0.6cm\==\hskip1mm\= $\Box\blackdiamond\neg \beta_{\pi(i)}$\\
T$^\bullet_\pi(\top)$\>=\> $\top$ \> $\mbox{T}^\circ_\pi(\top)$ \>=\> $\Box\bot$\\
T$^\bullet_\pi(\bot)$ \>=\> $\bbox\type{f}$   \> $\mbox{T}^\circ_\pi(\bot)$ \>=\> $\type{t}$\\
T$^\bullet_\pi(\varphi\wedge\psi)$ \>=\> T$^\bullet_\pi(\varphi)\wedge$T$^\bullet_\pi(\psi)$ \>$\mbox{T}^\circ_\pi(\varphi\wedge\psi)$\>=\> $\Box(\blackdiamond\mbox{T}^\circ_\pi(\varphi)\vee\blackdiamond\mbox{T}^\circ_\pi(\psi))$\\
T$^\bullet_\pi(\varphi\vee\psi)$\>=\> $\bbox(\Diamond \mbox{T}^\bullet_\pi(\varphi)\vee\Diamond\mbox{T}^\bullet_\pi(\psi))$
\> $\mbox{T}^\circ_\pi(\varphi\vee\psi)$ \>=\> $\mbox{T}^\circ_\pi(\varphi)\wedge\mbox{T}^\circ_\pi(\psi)$\\[1.5mm]
T$^\bullet_\pi(\overt(\varphi_1,\ldots,\varphi_n))$ \ = $\bbox\!\Diamond\!\diamondvert(\ldots,\underbrace{\mbox{T}^\bullet_\pi(\varphi_j)}_{i_j=1},\ldots, \underbrace{\mbox{T}^\circ_\pi(\varphi_r)}_{i_r=\partial},\ldots )$\\
$\mbox{T}^\circ_\pi(\overt(\varphi_1,\ldots,\varphi_n))\;$ = $\; \Box\neg\mbox{T}^\bullet_\pi(\overt(\varphi_1,\ldots,\varphi_n))$\\[1.5mm]
T$^\bullet_\pi(\ominus(\varphi_1,\ldots,\varphi_n))$ \  = $\;\bbox\neg\mbox{T}^\circ_\pi(\ominus(\varphi_1,\ldots,\varphi_n))$\\[2.5mm]
$\mbox{T}^\circ_\pi(\ominus(\varphi_1,\ldots,\varphi_n))$ \ = $\Box\blackdiamond\!\diamondminus(\ldots,\underbrace{\mbox{T}^\bullet_\pi(\varphi_j)}_{i_j=1},\ldots, \underbrace{\mbox{T}^\circ_\pi(\varphi_r)}_{i_r=\partial},\ldots )$
\end{tabbing}
}
\end{table}

\begin{thm}\rm
\label{properties of trans}
Let $\mathfrak{F}=(X,I,Y,R,S)$ be a frame (a sorted structure) and $\mathfrak{M}=(\mathfrak{F},V)$ a model of the sorted modal language. For any enumeration $\beta_0,\beta_1,\ldots$ of modal sentences of the second sort and any permutation $\pi:\omega\lra\omega$ of the natural numbers define a model $\mathfrak{N}_\pi$ on $\mathfrak{F}$ for the language $\Lambda_\tau$ by setting $V_\pi(p_i)=\lbbox\yvval{\beta_{\pi(i)}}_\mathfrak{M}$. Then for any sentences $\varphi,\psi$ of $\Lambda_\tau$
\begin{enumerate}
\item
$\val{\varphi}_\mathfrak{N}=\val{\mbox{T}^\bullet_\pi(\varphi)}_\mathfrak{M}= \val{\lbbox\neg\mbox{T}^\circ_\pi(\varphi)}_\mathfrak{M} =\val{\lbbox\largediamond\mbox{T}^\bullet_\pi(\varphi)}_\mathfrak{M}$
\item
$\yvval{\varphi}_\mathfrak{N}=\yvval{\mbox{T}^\circ(\varphi)}_\mathfrak{M}= \yvval{\largesquare\neg\mbox{T}^\bullet(\varphi)}_\mathfrak{M} =\yvval{\largesquare\lbdiamond\mbox{T}^\circ(\varphi)}_\mathfrak{M}$
\item
$\varphi\forces\psi$ iff $\mbox{T}^\bullet_\pi(\varphi)\models\mbox{T}^\bullet_\pi(\psi)$ iff $\mbox{T}^\circ_\pi(\psi)\vmodels\mbox{T}^\circ_\pi(\varphi)$
\end{enumerate}
\end{thm}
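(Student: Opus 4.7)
The plan is to prove items (1) and (2) simultaneously by structural induction on $\varphi\in\Lambda_\tau$, and then obtain (3) as an immediate consequence. The third equality in each of (1) and (2) is essentially a restatement of Galois (co-)stability: since $\lbbox\largediamond U = \lperp(U\rperp) = U$ for any Galois-stable $U\subseteq X$, and dually $\largesquare\lbdiamond V = (\lperp V)\rperp = V$ for any co-stable $V\subseteq Y$, these equalities reduce to the first equalities together with the observation that every set $\val{T^\bullet_\pi(\varphi)}_\mathfrak{M}$ is Galois stable and every $\yvval{T^\circ_\pi(\varphi)}_\mathfrak{M}$ is co-stable by construction, since every $T^\bullet_\pi(\varphi)$ has the shape $\bbox(\cdots)$ (making $\val{T^\bullet_\pi(\varphi)}_\mathfrak{M}$ an $\lbbox$-image) and every $T^\circ_\pi(\varphi)$ the shape $\Box(\cdots)$. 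The second equalities reflect that $T^\bullet_\pi(\varphi)\equiv\bbox\neg T^\circ_\pi(\varphi)$ and $T^\circ_\pi(\varphi)\equiv\Box\neg T^\bullet_\pi(\varphi)$, a property designed into each inductive clause of Table~\ref{trans}.

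The workhorse of the induction is a small dictionary of identities linking the residuated pair $(\largediamond,\lbbox)$ generated by $I$ to the Galois connection $(\rperp,\lperp)$ generated by $\upv=\complement I$:
\[
U\rperp = \complement\largediamond U,\qquad (\complement U)\rperp = \largesquare U,\qquad \lperp V = \complement\lbdiamond V,\qquad \lperp(\complement V) = \lbbox V,
\]
together with the composites $\lbbox\largediamond U = \lperp(U\rperp)$ and $\largesquare\lbdiamond V = (\lperp V)\rperp$. These let me convert freely between the polar view on $\Lambda_\tau$ and the classical/modal view on $L_1\cup L_\partial$. For the base case $\varphi = p_i$ the equality $\val{p_i}_\mathfrak{N} = V_\pi(p_i) = \lbbox\yvval{\beta_{\pi(i)}}_\mathfrak{M} = \val{T^\bullet_\pi(p_i)}_\mathfrak{M}$ is by definition, while the dual identity for $\yvval{p_i}_\mathfrak{N}$ unfolds $T^\circ_\pi(p_i)=\Box\blackdiamond\neg\beta_{\pi(i)}$ using $\blackdiamond=\neg\bbox\neg$ and $\largesquare\complement U = U\rperp$. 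The constants and the $\wedge$-case are routine; for $\vee$ one rewrites the interpretation of $\bbox(\Diamond T^\bullet_\pi(\varphi)\vee\Diamond T^\bullet_\pi(\psi))$ as $\lbbox(\largediamond\val{\varphi}_\mathfrak{N}\cup\largediamond\val{\psi}_\mathfrak{N})$ via the induction hypothesis, then applies $\largediamond U = \complement(U\rperp)$ and $\lperp(\complement V)=\lbbox V$ to identify it with $\lperp(\yvval{\varphi}_\mathfrak{N}\cap\yvval{\psi}_\mathfrak{N}) = \val{\varphi\vee\psi}_\mathfrak{N}$.

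The main obstacle, as expected, is the normal-operator cases $\overt(\overline{\varphi})$ and $\ominus(\overline{\varphi})$: the $\Lambda_\tau$-semantics of Table~\ref{sat} uses the Galois dual relations $R',S'$ on (co-)stable arguments, whereas the modal diamonds $\diamondvert,\diamondminus$ use the raw relations $R,S$ as classical J\'onsson--Tarski image operators $\ldvert,\ldminus$. The reconciliation is exactly the purpose of wrapping $\diamondvert$ with $\bbox\Diamond$ and $\diamondminus$ with $\Box\blackdiamond$ in Table~\ref{trans}, since these compositions realise the Galois closures $\lbbox\largediamond$ and $\largesquare\lbdiamond$ on the appropriate side. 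Concretely, I would unfold $b\dforces\overt(\overline{\varphi})$ by Table~\ref{sat}, rewrite $bR'\overline{u}$ as $b\in\complement\largediamond(R\overline{u})$, and obtain $\yvval{\overt(\overline{\varphi})}_\mathfrak{N}=\complement\largediamond\ldvert(\overline{W})$, where $W_j$ is $\val{\varphi_j}_\mathfrak{N}$ or $\yvval{\varphi_j}_\mathfrak{N}$ according to $i_j$; taking $\lperp$ yields $\val{\overt(\overline{\varphi})}_\mathfrak{N}=\lbbox\largediamond\ldvert(\overline{W})$, which by the induction hypothesis matches $\val{\bbox\Diamond\diamondvert(\cdots)}_\mathfrak{M}=\val{T^\bullet_\pi(\overt(\overline{\varphi}))}_\mathfrak{M}$. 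The $\ominus$-case is the mirror-image computation on the co-interpretation side, closing with $(\lperp V)\rperp=\largesquare\lbdiamond V$. Finally, (3) is immediate: by (1), $\val{\varphi}_\mathfrak{N}\subseteq\val{\psi}_\mathfrak{N}$ iff $\val{T^\bullet_\pi(\varphi)}_\mathfrak{M}\subseteq\val{T^\bullet_\pi(\psi)}_\mathfrak{M}$, and applying the order-reversing $\rperp$ together with (2) yields the equivalent inclusion $\yvval{T^\circ_\pi(\psi)}_\mathfrak{M}\subseteq\yvval{T^\circ_\pi(\varphi)}_\mathfrak{M}$.
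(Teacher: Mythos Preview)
Your proof is correct and follows essentially the same route as the paper: simultaneous structural induction for (1) and (2), with (3) an immediate consequence, and the operator cases resolved by identifying $\val{\overt(\overline{\varphi})}_\mathfrak{N}$ with the Galois closure $\lbbox\largediamond\ldvert(\overline{W})$ of the image operator applied to the (co)interpretations of the arguments. One small inaccuracy: not every $T^\bullet_\pi(\varphi)$ literally has the shape $\bbox(\cdots)$ (witness $T^\bullet_\pi(\top)=\top$ and $T^\bullet_\pi(\varphi\wedge\psi)=T^\bullet_\pi(\varphi)\wedge T^\bullet_\pi(\psi)$), but Galois stability of $\val{T^\bullet_\pi(\varphi)}_\mathfrak{M}$ still holds in these cases since $X$ is stable and stable sets are closed under intersection, so your argument goes through unchanged.
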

\begin{proof}
The proof is a modification of the proof given in \cite{redm}.
In \cite{redm} we gave a translation of the language of substructural logics into the language of sorted modal logic (more precisely, into the language of 2{\bf KDB}) and proved it to be fully abstract (\cite{redm}, Theorem 4.1, Corollary 4.9). The difference with the current translation is that instead of fixing the translation of propositional variables we define a family of translations, parameterized by a permutation $\pi$ on the natural numbers. This is needed in the proof of a van Benthem type correspondence result for propositional logics without distribution (Theorem \ref{van Benthem}). The second difference is that we treat here arbitrary normal operators, rather than the operators $\la,\circ,\ra$ of the language of a substructural logic.

Claim 3) is an immediate consequence of the first two, which we prove simultaneously by structural induction. Note that, for 2), the identities $\yvval{\varphi}_\mathfrak{N}= \yvval{\Box\neg\mbox{T}^\bullet(\varphi)}_\mathfrak{M} =\yvval{\Box\blackdiamond\mbox{T}^\circ(\varphi)}_\mathfrak{M}$ are easily seen to hold for any $\varphi$, given the proof of claim 1), since
\begin{tabbing}
$\yvval{\varphi}_\mathfrak{N}$\hskip2mm\==\hskip1mm\= $\val{\varphi}_\mathfrak{N}^\upv=\largesquare(-\val{\mbox{T}^\bullet(\varphi)}_\mathfrak{M})=
\yvval{\Box\neg\mbox{T}^\bullet(\varphi)}_\mathfrak{M}$
\\[2mm]
$\yvval{\varphi}_\mathfrak{N}$\>=\> $\largesquare(-\val{\mbox{T}^\bullet(\varphi)}_\mathfrak{M})= \largesquare(-\val{\bbox\Diamond\mbox{T}^\bullet(\varphi)}_\mathfrak{M})=
\yvval{\Box\blackdiamond\Box\neg\mbox{T}^\bullet(\varphi)}_\mathfrak{M}$\\
\>=\>$\yvval{\Box\blackdiamond\mbox{T}^\circ(\varphi)}_\mathfrak{M}$
\end{tabbing}
where $\largesquare$ is the set operator interpreting the modal operator $\Box$ and similarly for the other cases, differentiating set operators from logical operators by a larger font size for the first.

For the induction proof, we separate cases.
\begin{description}
  \item[(Case $p_i$)]  $\val{p_i}_\mathfrak{N}=V_\pi(p_i)=\val{\bbox\beta_{\pi(i)}}=\val{\mbox{T}^\bullet(p_i)}$, by definitions. The other two equalities of 1) are a result of residuation and of the fact that, by residuation again, every boxed formula is stable, i.e. $\bbox\beta\equiv\bbox\!\Diamond\!\bbox\beta$. Similarly for 2), using definitions and residuation.
  \item[(Case $\top,\bot,\wedge,\vee$)] See \cite{pll7}, or \cite{redm}, Theorem 4.1.
  \item[(Case $\overt$)] To prove this case, let $\ldvert$ be the sorted image operator generated by the relation $R$
  \[
  \ldvert(\ldots,\underbrace{U_j}_{{}^{U_j\subseteq A}_{\mbox{\tiny when } i_j=1}},\ldots,\underbrace{U_r}_{{}^{U_r\subseteq B}_{\mbox{\tiny when } i_r=\partial}},\ldots)=\{a\in A\midsp\exists u_1\cdots u_n(aRu_1\cdots u_n\wedge\bigwedge_{s=1}^{s=n}u_s\in U_s)\}
  \]
let also $\widehat{\ldvert}$ be the operator on $\powerset(A)$ resulting by composition with the Galois connection and defined on $W_1,\ldots,W_n\subseteq A$ by
\[
\widehat{\ldvert}(W_1,\ldots,W_n)=\ldvert(\ldots,\underbrace{W_j}_{i_j=1},\ldots,\underbrace{\largesquare(-W_r)}_{i_r=\partial},\ldots)
\]
and let $\bigovert$ be obtained as the closure of the restriction of $\widehat{\ldvert}$ on stable subsets $C_s=\lbbox\largediamond C_s\subseteq A$, for $s=1,\ldots,n$, i.e.
\[
\mbox{$\bigovert$}(C_1,\ldots,C_n)=\lbbox\largediamond\ldvert(\ldots,\underbrace{C_j}_{i_j=1},\ldots,\underbrace{\largesquare(-C_r)}_{i_r=\partial},\ldots)
\]
A dual operator $\bigovert^{\!\partial}$ on co-stable subsets $D_s=\largesquare\lbdiamond D_s\subseteq B$ is defined by composition with the Galois connection
\[
\mbox{$\bigovert$}^{\!\partial}(D_1,\ldots,D_n)=\largesquare(-\mbox{$\bigovert$}(\lbbox(-D_1),\ldots,\lbbox(-D_n)))
\]
In particular, if $C_s=\val{\varphi_s}_\mathfrak{N}$ and $D_s=\yvval{\varphi_s}_\mathfrak{N}$ we obtain
\begin{eqnarray*}
  \mbox{$\bigovert$}(\val{\varphi_1}_\mathfrak{N},\ldots,\val{\varphi_n}_\mathfrak{N}) &=& \lbbox\largediamond\ldvert(\ldots,\underbrace{\val{\varphi_j}_\mathfrak{N}}_{i_j=1},\ldots,\underbrace{\yvval{\varphi_r}_\mathfrak{N}}_{i_r=\partial},\ldots) \\
  \mbox{$\bigovert$}^{\!\partial}(\yvval{\varphi_1}_\mathfrak{N},\ldots,\yvval{\varphi_n}_\mathfrak{N}) &=& \largesquare(-\mbox{$\bigovert$}(\val{\varphi_1}_\mathfrak{N},\ldots,\val{\varphi_n}_\mathfrak{N})\\
  &=& \largesquare\lbdiamond\largesquare(-\ldvert(\ldots,\underbrace{\val{\varphi_j}_\mathfrak{N}}_{i_j=1},\ldots,\underbrace{\yvval{\varphi_r}_\mathfrak{N}}_{i_r =\partial},\ldots))\\
  &=& \largesquare(-\ldvert(\ldots,\underbrace{\val{\varphi_j}_\mathfrak{N}}_{i_j=1},\ldots,\underbrace{\yvval{\varphi_r}_\mathfrak{N}}_{i_r=\partial},\ldots))
\end{eqnarray*}
Computing membership in the sets $\mbox{$\bigovert$}(C_1,\ldots,C_n)$ and $\mbox{$\bigovert$}^{\!\partial}(D_1,\ldots,D_n)$, see \cite{sdl-exp}, Lemma 3.6, for the particular case $C_s=\val{\varphi_s}_\mathfrak{N}$ and $D_s=\yvval{\varphi_s}_\mathfrak{N}$ we obtain the interpretation of Table \ref{sat}, i.e. $\val{\overt(\varphi_1,\ldots,\varphi_n)}_\mathfrak{N}=\mbox{$\bigovert$}(\val{\varphi_1}_\mathfrak{N},\ldots,\val{\varphi_n}_\mathfrak{N})$ and $\yvval{\overt(\varphi_1,\ldots,\varphi_n)}_\mathfrak{N}= \mbox{$\bigovert$}^{\!\partial}(\yvval{\varphi_1}_\mathfrak{N},\ldots,\yvval{\varphi_n}_\mathfrak{N})$.

Comparing with the translation, which was  defined in line with the representation results of \cite{sdl-exp} by
\[
\mbox{T$^\bullet_\pi(\overt(\varphi_1,\ldots,\varphi_n))$ \ = $\bbox\!\Diamond\!\diamondvert(\ldots,\underbrace{\mbox{T}^\bullet_\pi(\varphi_j)}_{i_j=1},\ldots, \underbrace{\mbox{T}^\circ_\pi(\varphi_r)}_{i_r=\partial},\ldots )$}
\]
and using the induction hypothesis both claims 1) and 2) follow.
\end{description}
The case for $\ominus$ is similar to the case for $\overt$.
\end{proof}

\begin{coro}\rm
\label{modal characterization}
A modal formula $\alpha\in_1L_\tau$ is equivalent to a translation T$^\bullet_\pi(\varphi)$, for some permutation $\pi:\omega\ra\omega$, of a formula $\varphi$ in the language $\Lambda_\tau$ of normal lattice expansions of similarity type $\tau$ iff there is a modal formula $\beta\in_\partial L_\tau$ such that $\alpha\equiv\bbox\beta$ iff $\alpha$ is equivalent to $\bbox\!\Diamond\!\alpha$.

Similarly for a formula $\beta\in_\partial L_\tau$ and a translation T$^\circ_\pi(\varphi)$, in which case $\beta\equiv\mbox{T}^\circ_\pi(\varphi)$ iff $\beta\equiv\Box\alpha$, for some $\alpha\in_1 L_\tau$ iff $\beta\equiv\Box\blackdiamond\beta$.
\end{coro}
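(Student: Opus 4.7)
The three conditions I would show equivalent on a first-sort modal formula $\alpha\in_1 L_\tau$ are: (i) $\alpha\equiv\mbox{T}^\bullet_\pi(\varphi)$ for some $\varphi\in\Lambda_\tau$ and some permutation $\pi$; (ii) $\alpha\equiv\bbox\beta$ for some $\beta\in_\partial L_\tau$; (iii) $\alpha\equiv\bbox\!\Diamond\!\alpha$. The plan is to close the cycle $(\text{i})\Rightarrow(\text{ii})\Rightarrow(\text{iii})\Rightarrow(\text{i})$, after which the dual claim for the second sort follows by an entirely analogous argument with the pair $(\Box,\blackdiamond)$ replacing $(\bbox,\Diamond)$ throughout.

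For $(\text{i})\Rightarrow(\text{ii})$, I would proceed by structural induction on $\varphi$, checking that every clause of Table~\ref{trans} produces a formula equivalent to some $\bbox\beta$. The clauses for propositional variables, $\bot$, $\vee$, $\overt$, and $\ominus$ already have $\bbox$ as outermost connective. The two remaining cases are disposed of using $\top\equiv\bbox\type{t}$ (since $\lbbox B=A$) and $\bbox\beta_1\wedge\bbox\beta_2\equiv\bbox(\beta_1\wedge\beta_2)$ (normality of $\bbox$ in 2\textbf{KDB}). For $(\text{ii})\Rightarrow(\text{iii})$, the residuation $\largediamond\dashv\lbbox$ recorded in (\ref{resid}) makes $\lbbox\largediamond$ a closure operator and yields the stability identity $\lbbox\largediamond\lbbox V=\lbbox V$ for every $V\subseteq B$ (one inclusion from the unit of the adjunction, the other by applying the monotone $\lbbox$ to the counit $\largediamond\lbbox V\subseteq V$). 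Translated back to the object language, this reads $\bbox\!\Diamond\!\bbox\beta\equiv\bbox\beta$, so if $\alpha\equiv\bbox\beta$ then $\bbox\!\Diamond\!\alpha\equiv\bbox\!\Diamond\!\bbox\beta\equiv\bbox\beta\equiv\alpha$.

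The direction $(\text{iii})\Rightarrow(\text{i})$ is where the flexibility of the permutation parameter $\pi$ is actually used. Given $\alpha\equiv\bbox\!\Diamond\!\alpha$, observe that $\Diamond\alpha$ is a second-sort modal sentence and hence appears in the fixed enumeration as $\beta_k$ for some $k$. Choose any permutation $\pi:\omega\to\omega$ with $\pi(0)=k$ and set $\varphi=p_0\in\Lambda_\tau$; then $\mbox{T}^\bullet_\pi(p_0)=\bbox\beta_{\pi(0)}=\bbox\beta_k=\bbox\!\Diamond\!\alpha\equiv\alpha$, as required. For the matching step in the second-sort statement, if $\beta\equiv\Box\blackdiamond\beta$, locate $\neg\beta$ in the enumeration as some $\beta_k$, pick $\pi$ with $\pi(0)=k$, and set $\varphi=p_0$: then $\mbox{T}^\circ_\pi(p_0)=\Box\blackdiamond\neg\beta_k=\Box\blackdiamond\neg\neg\beta\equiv\Box\blackdiamond\beta\equiv\beta$, using classical double negation, which is available on both sorts.

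I do not anticipate any serious obstacle. The whole argument reduces to the two stability identities $\lbbox\largediamond\lbbox=\lbbox$ and $\largesquare\lbdiamond\largesquare=\largesquare$ (the latter supplied by the dual Galois pair $\lbdiamond\dashv\largesquare$), which come for free from residuation, together with the combinatorial freedom of choosing the permutation $\pi$ after $\alpha$ (or $\beta$) is presented. That device was inserted into the definition of the translation family in Table~\ref{trans} precisely to enable this kind of characterization result; the mildest subtlety is the uniform inductive verification that every translation is provably of a boxed form, which relies only on the standard normality of $\bbox$ and $\Box$.
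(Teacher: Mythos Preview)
Your argument is correct and follows essentially the same route as the paper's very brief proof. The only noteworthy difference is in the step $(\text{i})\Rightarrow(\text{ii})$: the paper simply invokes Theorem~\ref{properties of trans}, whose item~1 already records the equivalence $\mbox{T}^\bullet_\pi(\varphi)\equiv\bbox\neg\mbox{T}^\circ_\pi(\varphi)$ (so $\beta=\neg\mbox{T}^\circ_\pi(\varphi)$ works uniformly), whereas you carry out a direct structural induction over the clauses of Table~\ref{trans}. Both are fine; your version is a little more self-contained, the paper's is shorter because the work has already been absorbed into the preceding theorem. The converse direction and the use of the permutation parameter to place an arbitrary $\bbox\beta$ in the range of some $\mbox{T}^\bullet_\pi$ are exactly what the paper has in mind when it writes ``every formula $\bbox\beta$ is in the range of a translation $\mbox{T}^\bullet_\pi$ for some permutation $\pi$''.
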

\begin{proof}
The direction left-to-right follows from Theorem \ref{properties of trans}. Conversely, every formula $\bbox\beta$ is in the range of a translation T$^\bullet_\pi$ for some permutation $\pi$.
\end{proof}

\begin{rem}\rm\label{distribution in modal fragment}
Call a modal formula $\alpha$ {\em stable} if it is equivalent to $\bbox\Diamond\alpha$, and analogously for {\em co-stable}. The {\em stable fragment} (analogously for the co-stable fragment) of the sorted modal logic is the fragment of modal formulae that are stable in the above sense. The translation $T^\bullet$ maps a sentence of the non-distributive logic into the stable fragment of its companion modal logic. Analogously for the co-translation. Note that in the statement and proof of Theorem \ref{properties of trans} we did not need to place any restrictions on the frame relations $R,S$ and the proof remains valid when restricting to the class of frames where the relations $R,S$ satisfy the section stability requirement of Proposition \ref{dist from stability}.
\end{rem}

\section{First-Order Languages and Structures}
A sorted subset $S=(S_1,S_\partial)\subseteq Z=(Z_1,Z_\partial)$ is {\em finite} (more generally, of cardinallity $\kappa$) iff both $S_1,S_\partial$ are finite (resp. of cardinallity $\kappa$).  The sorted membership relation $a\in_1 Z, b\in_\partial Z$ means that $a\in Z_1=A$ and, respectively, $b\in Z_\partial=B$. For a pair $c=(a,b)$, with $a\in A,b\in B$, the statement $c\in Z$ has the obvious intended meaning. A sorted function $h:Z\lra Z'$ is a pair of functions $h_1:Z_1\lra Z'_1, h_\partial:Z_\partial\lra Z'_\partial$.
A sorted $(n+1)$-ary relation is a subset $R\subseteq Z_{i_{n+1}}\times\prod_{j=1}^{n}Z_{i_j}$, where for each $j$, $i_j\in\{1,\partial\}$. The tuple $\sigma=(i_{n+1};i_1\cdots i_n)\in\{1,\partial\}^{n+1}$ is referred to as the {\em sorting type} of $R$ and $i_{n+1}\in\{1,\partial\}$ as its {\em output type}. For an $(n+1)$-ary relation we typically use the notation $uRv_1\cdots v_n$ and sometimes, for notational transparency, $uR(v_1,\ldots,v_n)$.

Consider a structure $\mathfrak{F}=(A,B,R)$, where $Z_1=A, Z_\partial=B$ are the sort sets and $R$ is a relation of some sorting type $\sigma=(i_{n+1};i_1\cdots i_n)$. Nothing of significance for our current purposes changes if we consider expansions $(A,B,(R_s)_{s\in S})$ with a tuple of relations $R_s$, with $s$ in some index set $S$, each of some sorting type $\sigma_s$.

The {\em sorted first-order language with equality} $\mathcal{L}^1_s(V_1,V_\partial,{\bf R},=_1,=_\partial)$ of a structure $\mathfrak{F}=(A,B,R)$, for some $(n+1)$-ary sorted relation, is built on a countable sorted set $(V_1,V_\partial)$ of  individual variables $v^1_0,v^1_1,\ldots$ and $v^\partial_0,v^\partial_1,\ldots$, respectively, and an $(n+1)$-ary sorted predicate {\bf R} of some sorting type $\sigma=(i_{n+1};i_1\cdots i_n)$. Well-formed (meaning also well-sorted) formulae are built from atomic formulae $v^1_r=_1v^1_t$, $v^\partial_n=_\partial v^\partial_m$ and ${\bf R}(v^{i_{n+1}}_{r_{n+1}},v^{i_1}_{r_1},\ldots, v^{i_n}_{r_n})$ using negation, conjunction and sorted quantification $\forall^1v^1_r\Phi$, $\forall^\partial v^\partial_t\Psi$. We typically simplify notation and write $\forall^1v\Phi$, $\exists^\partial v\Phi$ etc, with an understanding and assumption of well-sortedness. We assume the usual definition of other logical operators ($\vee,\ra,\exists^1,\exists^\partial$) and of free and bound (occurrences) of a variable, as well as that of a closed formula (sentence), and we follow the usual convention about the meaning of displaying variables in a formula, as in $\Phi(v^1_0,v^\partial_1)$.

Given a sorted valuation $V$ of individual variables,  $\mathfrak{F}\models_s\Phi[V]$ is defined exactly as in the case of unsorted FOL. When $V(u^1_k)=a\in A=Z_1$, we may also display the assignment in writing $\mathfrak{F}\models_s\Phi(u^1_k)[u^1_k:=a]$ and similarly for more variables occurring free in $\Phi$. A formula $\Phi$ in $n$ free variables is also referred to as an $n$-ary {\em type}. A valuation $V$ {\em realizes} the type $\Phi$ in the structure $\mathfrak{F}$ iff $V$ satisfies $\Phi$, $\mathfrak{F}\models_s\Phi[V]$. A structure $\mathfrak{F}$ {\em realizes} $\Phi$ iff some valuation $V$ does (iff $\Phi$ is satisfiable in $\mathfrak{F}$), otherwise $\mathfrak{F}$ {\em omits} the type. Similarly for a set $\Sigma$ of $n$-ary types, which will itself, too, be referred to as an $n$-ary type.

An $\mathcal{L}^1_s$-{\em theory} $T$ is a set of $\mathcal{L}^1_s$-sentences and a {\em complete theory} is a theory whose set of consequences $\{\Phi\midsp T\models_s\Phi\}$ is maximal consistent. The (complete) $\mathcal{L}^1_s$-theory of a structure is designated by $\type{Th}_s(\mathfrak{F})$.
If $C=(C_1,C_\partial)\subseteq(A,B)$ is a sorted subset, the expansion $\mathcal{L}^1_{s}[C]$ of the language includes sorted constants $c^1\in C_1, c^\partial\in C_\partial$, for each member of $C_1,C_\partial$. We sometimes simplify notation writing $c_a, c_b$ for the constants naming the elements $a\in A, b\in B$. It is assumed, as usual, that a constant is interpreted as the element that it names. The extended structure interpreting the expanded signature of the language is designated by $(\mathfrak{F},c)_{c\in C}$, or just $\mathfrak{F}_{C}$.

For a similarity type $\tau$,  $\mathcal{L}^1_{s,\tau}$ is the sorted first-order language that includes a predicate of sorting type $\sigma$, for each $\sigma$ in $\tau$, together with a distinguished binary predicate {\bf I} of sorting type $(1;\partial)$.

To a structure $\mathfrak{F}=(A,B,R)$ we may also associate an {\em unsorted (single-sorted) first-order language with equality} $\mathcal{L}^1(V',{\bf U}_1,{\bf U}_\partial,{\bf R},=)$ where the interpretation of ${\bf U}_1,{\bf U}_\partial$ is, respectively, $Z_1=A, Z_\partial=B$ and $V'=V_1\cup V_\partial$. Assuming the sorting type of $R$ is $\sigma=(i_{n+1};i_1\cdots i_n)$, the structure validates all sentences pertaining to sorting constraints, which are of the following form, with  $i_{j_r}\in\{1,\partial\}$, for each $r$.
\begin{eqnarray}
 \forall v_1\cdots\forall v_{n+1}\;({\bf R}(v_{n+1},v_1,\ldots,v_n)\lra\bigwedge_{r=1}^{r=n+1} {\bf U}_{i_{j_r}}(v_r)) \label{validate1}\\
\forall v_1\forall v_2(v_1=v_2\lra (({\bf U}_1(v_1)\wedge {\bf U}_1(v_2))\vee ({\bf U}_\partial(v_1)\wedge {\bf U}_\partial(v_2)))\label{validate2}
\end{eqnarray}
In particular, \eqref{validate2} implies the sentence $\forall v\;({\bf U}_1(v)\vee{\bf U}_\partial(v))$.
The (unsorted) $\mathcal{L}^1$-theory of $\mathfrak{F}$ will be designated by $\type{Th}(\mathfrak{F})$.

By sort-reduction (for details  cf. \cite{enderton}, ch. 4), the language $\mathcal{L}^1_s$ can be translated into $\mathcal{L}^1$, by relativising quantifiers (where $i_r\in\{1,\partial\}$)
\[
\Psi=\forall^{i_r}u^{i_r}_k\Phi\;\;\Mapsto\;\;\Psi^*=\forall u^{i_r}_k\;({\bf U}_{i_r}(u^{i_r}_k)\lra\Phi^*)
\]
and replacing $=_1, =_\partial$ by a single equality predicate $=$. For later use we list the following result.
\begin{thm}[Enderton \cite{enderton}, ch. 4.3]\rm
\label{sorted compactness}\mbox{}
\begin{enumerate}
\item (Sort-reduction) If $\Phi^*$ is the sort-reduct of $\Phi$ and $V$ a valuation of variables, then  $\mathfrak{F}\models_s\Phi[V]$ iff $\mathfrak{F}\models\Phi^*[V]$.
\item (Compactness) If every finite subset of a set $\Sigma$ of many-sorted sentences in $\mathcal{L}^1_s$ has a model, then $\Sigma$ has a model.\telos
\end{enumerate}
\end{thm}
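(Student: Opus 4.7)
The plan is to establish part 1 (sort-reduction) by a routine structural induction on $\Phi$, and then derive part 2 (compactness) by translating the sorted set $\Sigma$ into an unsorted set and invoking the classical first-order compactness theorem.

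For part 1, I would induct on the complexity of $\Phi$. The atomic case splits in two. For equality atoms $v^{i_r}_k=_{i_r}v^{i_r}_j$, the sorted clause is interpreted inside $Z_{i_r}$ while the unsorted reduct uses a single $=$; these agree because well-sortedness of $\Phi$ forces both variables to have matching sort $i_r$ and because $\mathfrak{F}$ validates \eqref{validate2}. Atomic relation formulas are literally unchanged by the translation, and \eqref{validate1} guarantees the implicit sort constraint on arguments. The Boolean cases ($\neg,\wedge$) are immediate since the translation commutes with them. The crucial step is the quantifier case: the sorted clause $\mathfrak{F}\models_s\forall^{i_r}u\,\Phi[V]$ ranges over $Z_{i_r}$, while the unsorted clause $\mathfrak{F}\models\forall u\,({\bf U}_{i_r}(u)\ra\Phi^*)[V]$ ranges over the whole universe but vacuously satisfies the implication outside of $Z_{i_r}$. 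Since ${\bf U}_{i_r}$ is interpreted by $Z_{i_r}$, the two readings coincide, and the inductive hypothesis handles $\Phi$ versus $\Phi^*$.

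For part 2, I would assemble a set $\Xi$ of sort axioms in the single-sorted language, consisting of \eqref{validate1} and \eqref{validate2} for the signature at hand, together with the coverage sentence $\forall v\,({\bf U}_1(v)\vee{\bf U}_\partial(v))$ and the non-emptiness axioms $\exists v\,{\bf U}_1(v)$, $\exists v\,{\bf U}_\partial(v)$. Let $\Sigma^*=\{\Phi^*:\Phi\in\Sigma\}\cup\Xi$. Given any finite $\Sigma_0\subseteq\Sigma$, a sorted model of $\Sigma_0$ gives rise, by forming the disjoint union of its sort universes and interpreting ${\bf U}_1,{\bf U}_\partial$ as those universes, to an unsorted model of $\Sigma_0^*\cup\Xi$; that this is indeed a model is exactly what part 1 delivers. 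Hence every finite subset of $\Sigma^*$ is satisfiable, so classical single-sorted compactness yields a model $\mathfrak{M}\models\Sigma^*$. From $\mathfrak{M}$ I recover a sorted structure $\mathfrak{F}$ by setting $Z_1={\bf U}_1^\mathfrak{M}$, $Z_\partial={\bf U}_\partial^\mathfrak{M}$, and restricting every relation to the appropriate product of sort universes (which is legitimate because $\mathfrak{M}$ satisfies $\Xi$). A final appeal to part 1 gives $\mathfrak{F}\models_s\Sigma$.

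The main obstacle is purely organizational bookkeeping: the set $\Xi$ must be rich enough that any unsorted model coming out of classical compactness can be repackaged as a sorted structure with non-empty sort universes and with every relation respecting its declared sorting type, and one must also take care that the single equality predicate behaves coherently across sorts. Once $\Xi$ is chosen to enforce these constraints, what remains is textbook induction plus a direct invocation of classical compactness, which is precisely the route followed in \cite{enderton}.
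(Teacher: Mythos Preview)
Your proposal is correct and follows the standard textbook argument: structural induction for the sort-reduction lemma, then a reduction to classical compactness via a set of sorting axioms. The paper itself does not supply a proof of this theorem at all; it simply cites Enderton, chapter~4.3, and closes the statement with a box. Your sketch is precisely the route taken in \cite{enderton}, so there is nothing to compare beyond noting that you have written out what the paper leaves to the reference.
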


\subsection{Standard Translation of Sorted Modal Logic}
The standard translation of sorted modal logic into sorted  FOL  is exactly as in the single-sorted case, except for the relativization to two sorts, displayed in Table \ref{std-kdb}, where $\stx{u}{}, \sty{v}{}$ are defined by mutual recursion and $u,v$ are individual variables of sort $1,\partial$, respectively.

\begin{table}[!htbp]
\caption{Standard Translation of the sorted modal language}
\label{std-kdb}
\begin{tabbing}
$\stx{u}{P_i}$\hskip0.8cm\==\hskip2mm\= ${\bf P}_i(u)$ \\
$\stx{u}{\neg \alpha}$\>=\> $\neg\stx{u}{\alpha}$ \\
$\stx{u}{\alpha\wedge \alpha'}$\>=\> $\stx{u}{\alpha}\wedge\stx{u}{\alpha'}$ \\
$\stx{u}{\bbox \beta}$ \>=\> $\forall v\;({\bf I}(u,v)\;\lra\;\sty{v}{\beta})$ \\
$\stx{u}{\diamondvert(\overline{\theta})}$ \>=\> $\exists \overline{u}\;({\bf R}(u,\overline{u})\;\wedge\;\bigwedge_{j=1,\dots,n}^{i_j=1}\stx{u_j}{\theta_j}\;\wedge\; \bigwedge_{r=1,\dots,n}^{i_r=\partial}\stx{u_r}{\theta_r})$\\[3mm]
$\sty{v}{Q_i}$\>=\> ${\bf Q}_i(v)$\\
$\sty{v}{\neg \beta}$ \>=\> $\neg\sty{v}{\beta}$\\
$\sty{v}{\beta\wedge \beta'}$\>=\> $\sty{v}{\beta}\wedge\sty{v}{\beta'}$\\
$\sty{v}{\Box \alpha}$ \>=\> $\forall u\;({\bf I}(u,v)\;\lra\;\stx{u}{\alpha})$\\
$\sty{v}{\diamondminus(\overline{\theta'})}$ \>=\> $\exists \overline{v}\;({\bf S}(v,\overline{v})\;\wedge\;\bigwedge_{j=1,\dots,m}^{i_j=1}\stx{v_j}{\theta_j}\;\wedge\; \bigwedge_{r=1,\dots,m}^{i_r=\partial}\stx{v_r}{\theta_r})$
\end{tabbing}
\end{table}

\begin{prop}\rm
\label{std trans}
For any sorted modal formulae $\alpha,\beta$ (of sort $1, \partial$, respectively), for any model $\mathfrak{M}=((A,I,B,R,S),V)$ and for any $a\in A, b\in B$, $\mathfrak{M},a\models \alpha$ iff $\mathfrak{M}\models\stx{u}{\alpha}[u:=a]$ and $\mathfrak{M},b\vmodels \beta$ iff $\mathfrak{M}\models\sty{v}{\beta}[v:=b]$.
\end{prop}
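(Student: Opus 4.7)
The plan is to prove both statements simultaneously by structural induction on the modal formulae $\alpha$ (of sort $1$) and $\beta$ (of sort $\partial$). A simultaneous induction is forced on us because the translation $\stx{u}{}$ is defined by mutual recursion with $\sty{v}{}$ (a sort-$1$ box $\bbox\beta$ recurses into a sort-$\partial$ formula, and conversely for $\Box\alpha$), and similarly the polyadic diamonds $\diamondvert, \diamondminus$ mix subformulae of both sorts according to their respective sorting types.

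First I would dispatch the atomic and Boolean cases. For propositional variables $P_i, Q_i$ the equivalence is immediate from the definitions of $\stx{u}{P_i}={\bf P}_i(u)$ and $\sty{v}{Q_i}={\bf Q}_i(v)$ together with the satisfaction clauses $a\models P_i\Llra a\in V(P_i)$ and $b\vmodels Q_i\Llra b\in V(Q_i)$. Negation and conjunction in each sort follow routinely by the induction hypothesis, since the translation commutes with $\neg$ and $\wedge$ in each sort and the quantifier at the witness point remains fixed.

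The modal-box cases are the first genuinely mutual step. For $\bbox\beta$ with $a\in A$, the semantic clause in Table~\ref{sorted int} gives $a\models\bbox\beta$ iff $\forall b\in B\;(aIb\Ra b\vmodels\beta)$. By the induction hypothesis applied to the sort-$\partial$ formula $\beta$, this is equivalent to $\forall b\in B\;(aIb\Ra\mathfrak{M}\models\sty{v}{\beta}[v:=b])$, which is precisely $\mathfrak{M}\models\forall v\;({\bf I}(u,v)\lra\sty{v}{\beta})[u:=a]=\mathfrak{M}\models\stx{u}{\bbox\beta}[u:=a]$. The case of $\Box\alpha$ is symmetric. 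For the polyadic diamonds $\diamondvert(\overline{\theta})$ and $\diamondminus(\overline{\theta})$, the semantic clauses supply an existential over witnesses $w_1,\ldots,w_n$ of the appropriate sorts, with conjuncts $w_j\zmodels\theta_j$; each such conjunct is of lower complexity than the whole formula, so the induction hypothesis (applied per sort according to $\sigma(\diamondvert)$ and $\sigma(\diamondminus)$) converts $w_j\zmodels\theta_j$ into the appropriate $\stx{u_j}{\theta_j}$ or $\sty{u_j}{\theta_j}$. Matching up with the right-hand sides in Table~\ref{std-kdb} then closes this case.

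The only real bookkeeping obstacle is ensuring that the sorts of the first-order witness variables $u_j, v_j$ in the translation of $\diamondvert, \diamondminus$ match the sort types $i_j\in\{1,\partial\}$ dictated by $\sigma(\diamondvert)$ (resp.\ $\sigma(\diamondminus)$), so that the appropriate branch of the induction hypothesis applies. Once this sort-tracking is made explicit, as in the split conjunctions $\bigwedge^{i_j=1}\stx{u_j}{\theta_j}\wedge\bigwedge^{i_r=\partial}\sty{u_r}{\theta_r}$ of Table~\ref{std-kdb}, the equivalence reduces on both sides to the same first-order statement and the induction is complete. No appeal to Section~\ref{dist from stability}'s stability conditions on $R,S$ is needed at any step; the proposition is about the sorted modal language over an arbitrary $\tau$-frame.
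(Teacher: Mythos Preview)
Your proposal is correct and is precisely the argument the paper has in mind: the paper's own proof is simply ``Straightforward,'' which unpacks to exactly the simultaneous structural induction you describe. Your handling of the mutual recursion for $\bbox\beta$/$\Box\alpha$ and the sort-tracking for the polyadic diamonds is the right expansion of that one word.
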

\begin{proof}
  Straightforward.
\end{proof}

We next review and adapt to the sorted case the basics on ultraproducts and ultrapowers that will be needed in the sequel. Consult \cite{chang-keisler,bell-slomson} for details.

\subsection{Sorted Ultraproducts}
Let $(\mathfrak{F}_j)_{j\in J}=(A_j,B_j,R_j)_{j\in J}$, with $J$ some index set, be a family of structures with sorted relations $R_j$ of some fixed sorting type $\sigma$.

An {\em ultrafilter over $J$} is an ultrafilter (maximal filter) $U$ of the powerset Boolean algebra $\powerset(J)$.
Let $\prod_U A_j,\prod_U B_j$ be the ultraproducts of the families of sets $(A_j)_{j\in J}, (B_j)_{j\in J}$ over the ultrafilter $U$. Members of $\prod_U A_j$ are equivalence classes $f_U$ of functions $f\in\prod_{j\in J}A_j$ (i.e. functions $f:J\lra\bigcup_jX_j$ such that for all $j\in J, f(j)\in A_j$) under the equivalence relation $f\sim_U g$ iff  $\{j\in J\midsp f(j)=g(j)\}\in U$.

Goldblatt \cite{goldblatt-definable2018,goldblatt-morphisms2019,goldblatt-ultra-2018} introduced ultraproducts for polarities (sorted structures with a binary relation), slightly generalizing the classical construction. We review the definition, adapting to the case of an arbitrary $(n+1)$-ary relation.

\begin{defn}[Ultraproducts of Sorted Structures]\rm
\label{defn ultraproducts}
Given a family $(\mathfrak{F}_j)_{j\in J}$ of structures (models) with $J$ some index set, their {\em ultraproduct} is the sorted structure $\prod_U\mathfrak{F}_j=(\prod_U A_j,\prod_U B_j,R_U)$ where
\begin{enumerate}
  \item $\prod_U A_j,\prod_U B_j$ are the ultraproducts over $U$ of the families of sets $(A_j)_{j\in J}$, $(B_j)_{j\in J}$ .
  \item
  Where the sorting type of $R_j$  for all $j\in J$ is $\sigma=(i_{n+1};i_1,\ldots,i_n)$  and for each $r\in\{1,\ldots,n,n+1\}$ we have  $h_{r,U}\in\prod_U A_j$, if $i_r=1$,  and $h_{r,U}\in\prod_U B_j$ if $i_r=\partial$, the relation $R_U$, of sorting type $\sigma$ is defined by setting
  \begin{equation}
  \label{ultraproduct relation}
  h_{n+1,U}R_U(h_{1,U},\ldots, h_{n,U})\mbox{ iff }\{j\in J\midsp h_{n+1}(j)R_j (h_{1}(j),\ldots, h_{n}(j))\}\in U
  \end{equation}
\end{enumerate}
If for all $j\in J$, $\mathfrak{F}_j=\mathfrak{F}$, then the ultraproduct is referred to as the {\em ultrapower} $\prod_U\mathfrak{F}$ of $\mathfrak{F}$ over the ultrafilter $U$.\telos
\end{defn}

Considering the structures $\mathfrak{F}_j$ as $\mathfrak{L}^1$-structures,
by the fundamental theorem of ultraproducts ({\L}os's theorem) we have
{\small
\begin{eqnarray}\label{ultraproduct semantics}
  \mbox{$\prod_U$}\mathfrak{F}_j\models\Phi[f_{1,U},\ldots,f_{t,U}]\;\mbox{ iff }
   \{j\in J\midsp \mathfrak{F}_j\models \Phi[f_{1}(j),\ldots,f_{t}(j)]\}\in U
\end{eqnarray}}
\noindent
By sort reduction, {\L}os's theorem holds when the $\mathfrak{F}_j$ are regarded as models of the sorted language (as $\mathcal{L}^1_s$-structures), as well. Indeed
\begin{tabbing}
$\mbox{$\prod_U$}\mathfrak{F}_j\models_s\Phi[f_{1,U},\ldots,f_{t,U}]$ \= iff \= $\mbox{$\prod_U$}\mathfrak{F}_j\models\Phi^*[f_{1,U},\ldots,f_{t,U}]$\\
\>iff\> $\{j\in J\midsp \mathfrak{F}_j\models \Phi^*[f_{1}(j),\ldots,f_{t}(j)]\}\in U$\\
\>iff\> $\{j\in J\midsp \mathfrak{F}_j\models_s \Phi[f_{1}(j),\ldots,f_{t}(j)]\}\in U$
\end{tabbing}
We use the standard notation $\mathfrak{F}\equiv\mathfrak{G}$ for {\em elementarily equivalent structures} (satisfying the same set of sentences) and $\mathfrak{F}\prec\mathfrak{G}$ to designate the fact that $\mathfrak{G}$ is an {\em elementary extension of} $\mathfrak{F}$, meaning that $\mathfrak{F}\subset\mathfrak{G}$ ($\mathfrak{F}$ is a substructure of $\mathfrak{G}$) and for any $n$-ary type $\Phi(w_{i_1},\ldots,w_{i_n})$ of some sort $(i_1,\ldots,i_n)\in\{1,\partial\}^n$ and any valuation $V$ for $\mathfrak{F}$ we have $\mathfrak{F}\models\Phi[V]$ iff $\mathfrak{G}\models\Phi[V]$. Finally, we recall that a map $h:\mathfrak{F}\preceq\mathfrak{G}$ is an {\em elementary embedding} iff for any $n$-ary type $\Phi$ as above we have $\mathfrak{F}\models\Phi(\overline{w_{i_j}})[V]$ iff $\mathfrak{G}\models\Phi(\overline{w_{i_j}})[h\circ V]$.

The same argument as above, appealing to sort-reduction, applies to lift to the sorted case well-known consequences of {\L}os's theorem (in particular, Corollary 4.1.13 of \cite{chang-keisler}, restated for the sorted case below).

\begin{coro}\rm
\label{elementary to ultrapower}
If $\mathfrak{F}$ is an $\mathcal{L}^1_s$-structure, $J$ an index set and $U$ an ultrafilter over $J$, then $\mathfrak{F}$ and the ultrapower $\prod_U\mathfrak{F}$ are elementarily equivalent, $\mathfrak{F}\equiv\prod_U\mathfrak{F}$.
Furthermore, the  embedding \mbox{$e=(e_1:Z_1\ra\prod_U Z_1,e_\partial:Z_\partial\ra\prod_U Z_\partial)$} sending elements $a\in Z_1=A$, $b\in Z_\partial=B$ to the respective equivalence classes $e(a)=e_1(a)=f_{a,U}$, $e(b)=e_\partial(b)=f_{b,U}$ of the constant functions  $f_a(j)=a$, $f_b(j)=b$, for all $j\in J$, is an elementary embedding $e:\mathfrak{F}\prec\prod_U\mathfrak{F}$.
\end{coro}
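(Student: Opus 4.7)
The plan is to derive both assertions by a double application of {\L}os's theorem in combination with the sort-reduction principle recorded in Theorem \ref{sorted compactness}. For elementary equivalence, let $\Phi$ be any sentence of $\mathcal{L}^1_s$ and $\Phi^*$ its sort-reduct. By the unsorted {\L}os's theorem applied to the constant family $\mathfrak{F}_j=\mathfrak{F}$, we have $\prod_U\mathfrak{F}\models\Phi^*$ iff $\{j\in J\midsp\mathfrak{F}\models\Phi^*\}\in U$. This index set is either all of $J$ (if $\mathfrak{F}\models\Phi^*$) or $\emptyset$ (otherwise); since $U$ is a proper ultrafilter containing $J$ but not $\emptyset$, one obtains $\prod_U\mathfrak{F}\models\Phi^*$ iff $\mathfrak{F}\models\Phi^*$. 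Invoking sort-reduction on both sides then delivers $\mathfrak{F}\equiv\prod_U\mathfrak{F}$ in $\mathcal{L}^1_s$.

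For the embedding claim I would first verify that $e$ is well-defined and sort-preserving: an element $a\in A=Z_1$ is sent to the class $f_{a,U}$ of the constant function $f_a:J\ra A$, a bona fide element of $\prod_U A_j$, and symmetrically for $b\in B=Z_\partial$. To establish that $e$ is elementary, fix an $n$-ary type $\Phi(\overline{w})$ of $\mathcal{L}^1_s$ and a valuation $V$ on $\mathfrak{F}$ with $V(\overline{w})=\overline{a}$. After sort-reduction, {\L}os's theorem yields
\[
\mbox{$\prod_U$}\mathfrak{F}\models\Phi^*[f_{a_1,U},\ldots,f_{a_n,U}]\;\mbox{ iff }\;\{j\in J\midsp\mathfrak{F}\models\Phi^*[f_{a_1}(j),\ldots,f_{a_n}(j)]\}\in U.
\]
Because each $f_{a_r}$ is the constant function with value $a_r$, the displayed index set equals $J$ if $\mathfrak{F}\models\Phi^*[\overline{a}]$ and $\emptyset$ otherwise, so properness of $U$ again forces the equivalence. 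Translating back through sort-reduction gives $\mathfrak{F}\models_s\Phi[V]$ iff $\prod_U\mathfrak{F}\models_s\Phi[e\circ V]$, which is exactly the definition of an elementary embedding.

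The only genuinely subtle point — and the one I would check explicitly — concerns the behaviour of sort-reducts of relativised quantifiers under substitution of constants naming the embedded parameters. One needs that $f_{a,U}$ satisfies $\mathbf{U}_1$ in the ultrapower (and $f_{b,U}$ satisfies $\mathbf{U}_\partial$), so that relativisations such as $\forall v\,(\mathbf{U}_1(v)\ra\Psi^*)$ are evaluated against the intended sort. This is immediate since the set $\{j\in J\midsp\mathbf{U}_1(a)\}$ equals $J$, but it is the place where one must invoke sort-consistency of the embedding rather than bare {\L}os's theorem. Once this bookkeeping is dispatched, the argument is a verbatim adaptation of Corollary~4.1.13 of Chang--Keisler, as the excerpt anticipates; I do not foresee any further obstacle.
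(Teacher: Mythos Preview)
Your proof is correct and follows essentially the same approach as the paper's, which simply appeals to sort-reduction (Theorem~\ref{sorted compactness}) and notes that the direct argument is identical to the unsorted one in \cite{bell-slomson}, Lemma~2.3. You have merely spelled out in detail what the paper leaves as a sketch; the extra care you take about sort-predicate satisfaction under the embedding is good bookkeeping but not a departure in method.
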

\begin{proof}[Sketch of Proof]
By appealing to sort-reduction (cf Theorem \ref{sorted compactness}). In fact,
   a direct argument for the sorted case is literally the same as in the unsorted case, as seen by consulting for example the proof in \cite{bell-slomson}, Lemma 2.3.
\end{proof}
Note, in particular, that for a unary type $\Phi(u^1)\in\mathcal{L}^1_s$ and any element say $a\in A$ (i.e. a valuation $V$ such that $V(u^1)=a\in A$) we have (dropping the sorting superscript on the variable) the following
\begin{coro}\rm
\label{useful coro}
For $u\in V_1$,
$\mathfrak{F}\models_s\Phi(u)[u:=a]$ \ \ iff\ \
  $\prod_U\mathfrak{F}\models_s\Phi(u)[u:=f_{a,U}]$. The same holds for a type with a free variable $v\in V_\partial$.
\end{coro}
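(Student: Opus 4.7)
The plan is to derive this corollary directly from Corollary \ref{elementary to ultrapower}, which has already done the heavy lifting by establishing that the canonical diagonal map $e=(e_1,e_\partial):\mathfrak{F}\prec\prod_U\mathfrak{F}$ sending $a\mapsto f_{a,U}$ and $b\mapsto f_{b,U}$ is an elementary embedding. Since the content of the present corollary is literally the instance of the elementary embedding clause specialised to unary types, essentially no further ultraproduct machinery is needed; the proof will amount to unwinding the definitions.

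First I would fix a unary type $\Phi(u)$ with $u\in V_1$ and an element $a\in A=Z_1$, and consider the valuation $V$ for $\mathfrak{F}$ with $V(u)=a$. Composing with the sorted embedding $e$ gives a valuation $e\circ V$ for $\prod_U\mathfrak{F}$ with $(e\circ V)(u)=e_1(a)=f_{a,U}$. By the defining property of an elementary embedding (as recalled just before Corollary \ref{elementary to ultrapower}), we have
\[
\mathfrak{F}\models_s\Phi(u)[V]\quad\text{iff}\quad\mbox{$\prod_U$}\mathfrak{F}\models_s\Phi(u)[e\circ V],
\]
which is exactly the stated equivalence $\mathfrak{F}\models_s\Phi(u)[u:=a]$ iff $\prod_U\mathfrak{F}\models_s\Phi(u)[u:=f_{a,U}]$. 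The argument for $v\in V_\partial$ with $V(v)=b\in B$ is entirely analogous, using instead the second component $e_\partial(b)=f_{b,U}$ of the sorted embedding.

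There is essentially no obstacle here; the only point worth flagging is that one must check that the sorted notion of elementary embedding used in Corollary \ref{elementary to ultrapower} really applies to types of arbitrary sort and arity including the unary cases in both sorts, which is immediate from the general statement (and from sort-reduction together with the classical fact for single-sorted elementary embeddings, cf.\ Theorem \ref{sorted compactness}(1)). Hence the proof reduces to a one-line specialisation, which matches the ``Straightforward'' style of proofs given for the preceding results (e.g.\ Proposition \ref{std trans}).
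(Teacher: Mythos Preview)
Your proof is correct. You derive the corollary as an immediate specialisation of Corollary~\ref{elementary to ultrapower}: since $e:\mathfrak{F}\prec\prod_U\mathfrak{F}$ is an elementary embedding with $e_1(a)=f_{a,U}$, the equivalence for unary types is a direct instance of the defining clause of elementary embeddings.

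The paper takes a different route. Rather than citing Corollary~\ref{elementary to ultrapower}, it re-derives the result from scratch via a chain of equivalences: sort-reduction to pass from $\models_s$ to $\models$, then {\L}os's theorem to reduce to membership of $\{j\midsp \mathfrak{F}\models\Phi^*(u)[u:=f_a(j)]\}$ in $U$, then the observation that $f_a$ is constant so this set is either $J$ or $\emptyset$, and finally sort-reduction back. Your approach is shorter and recognises that the content is already contained in the preceding corollary; the paper's approach is more explicit about the underlying mechanism (and in effect supplies the detailed argument that Corollary~\ref{elementary to ultrapower} only sketched). Both are valid, and yours is arguably the more economical presentation given what has already been established.
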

\begin{proof}\mbox{}
\begin{tabbing}
$\prod_U\mathfrak{F}\models_s\Phi(u)[u:=f_{a,U}]$\hskip3mm\= iff \hskip3mm\= $\prod_U\mathfrak{F}\models\Phi^*(u)[u:=f_{a,U}]$ \hskip3mm\= (by sort-reduction)\\
\hspace*{1cm}\= iff \hskip3mm\= $\{j\midsp \mathfrak{F}\models\Phi^*(u)[u:=f_a(j)]\}\in U$ \> (by {\L}os's theorem)\\
\>iff\> $\{j\midsp \mathfrak{F}\models\Phi^*(u)[u:=a]\}\in U$ \> ($\forall j\;f_a(j)=a$)\\
\>iff\> $\mathfrak{F}\models\Phi^*(u)[u:=a]$ \hskip5mm ($U$ is a filter, so $\{j\midsp \mathfrak{F}\models\Phi^*(u)[u:=a]\}\neq\emptyset$)\\
\>iff\> $\mathfrak{F}\models_s\Phi(u)[u:=a]$ \hskip5mm (by sort-reduction)
\end{tabbing}
and this proves the claim.
\end{proof}

\begin{defn}[Ultrapowers of Models]\rm
If $\mathfrak{M}=(\mathfrak{F},V)$ is a model and $U$ is an ultrafilter over an index set $J$, the ultrapower of $\mathfrak{M}$ is defined by $\prod_U\mathfrak{M}=(\prod_U\mathfrak{F}, V_U)$ where $V_U(u)=f_{a,U}$ iff $V(u)=a$.
\end{defn}

\subsection{Saturated Structures}
Let $\mathfrak{F}=\langle A,B, R\rangle$ be an $\mathcal{L}^1_s$-structure. The structure $\mathfrak{F}$ is called {\em $\omega$-saturated} iff  for any  finite subset $C\subseteq A\cup B$, every unary type $\Sigma$ of the expanded language $\mathcal{L}^1_s[C]$  that is consistent with the theory $\type{Th}_s(\mathfrak{F}, c)_{c\in C}$ is realized  in $(\mathfrak{F}, c)_{c\in C}$. If reference to sorting is disregarded, this is precisely the meaning of $\omega$-saturated structures for (unsorted) first-order languages. The definition generalizes to $\kappa$-saturated structures, for any cardinal $\kappa$, but we shall only have use of $\omega$-saturated structures in the sequel.

$\omega$-saturated first-order (unsorted) structures can be constructed as unions of elementary chains, or as ultrapowers. Consult Bell and Slomson \cite{bell-slomson}, Theorem 1.7 and Theorem 2.1, or  Chang and Keisler \cite{chang-keisler}, ch. 5, for details. Any two elementarily equivalent $\kappa$-saturated structures are isomorphic (\cite{chang-keisler}, Theorem 5.1.13, \cite{bell-slomson}, Theorem 3.1), so we only discuss  ultrapowers. With some necessary adaptation, the original arguments for the unsorted case (for the existence of $\omega$-saturated extensons) can be reproduced for the sorted case. It is easier, however, to derive the result for the sorted case by reducing the problem to the unsorted case, using sort-reduction, as we do below.

\begin{thm}\rm
\label{omega-saturated thm}
Every $\mathcal{L}^1_s$-structure $\mathfrak{F}$ has an elementary $\omega$-saturated extension  $h:\mathfrak{F}\preceq\mathfrak{F}^\flat$.
\end{thm}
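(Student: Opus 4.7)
My plan is to reduce to the unsorted case, exactly as the text suggests, by (i) sort-reducing $\mathfrak{F}$ to an $\mathcal{L}^1$-structure $\mathfrak{F}^*$, (ii) invoking the classical construction of $\omega$-saturated elementary extensions for single-sorted first-order structures (for instance via iterated ultrapowers, as in \cite{chang-keisler} Ch.~5 or \cite{bell-slomson} Thm.~2.1), and (iii) checking that the resulting extension canonically bears the structure of a sorted extension of $\mathfrak{F}$.

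Concretely, let $\mathfrak{F}^*=\langle A\cup B,\,{\bf U}_1,{\bf U}_\partial,R\rangle$ be the sort-reduct of $\mathfrak{F}$; this structure validates the sorting sentences \eqref{validate1} and \eqref{validate2}. Apply the classical theorem to obtain an elementary extension $h^*\!:\mathfrak{F}^*\preceq\mathfrak{N}$ with $\mathfrak{N}$ $\omega$-saturated. Since \eqref{validate1} and \eqref{validate2} are first-order sentences and $h^*$ is elementary, they remain true in $\mathfrak{N}$, so the universe $N$ is partitioned by ${\bf U}_1^\mathfrak{N}$ and ${\bf U}_\partial^\mathfrak{N}$, and $R^\mathfrak{N}$ respects the sorting. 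We may then set $A^\flat={\bf U}_1^\mathfrak{N}$, $B^\flat={\bf U}_\partial^\mathfrak{N}$ and $R^\flat=R^\mathfrak{N}$ to obtain a well-defined $\mathcal{L}^1_s$-structure $\mathfrak{F}^\flat$ whose sort-reduct is (isomorphic to) $\mathfrak{N}$, with $h_1=h^*\!\restriction\!A$ and $h_\partial=h^*\!\restriction\!B$. Part (1) of Theorem~\ref{sorted compactness} then yields $h=(h_1,h_\partial):\mathfrak{F}\preceq\mathfrak{F}^\flat$ as $\mathcal{L}^1_s$-structures.

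It remains to verify $\omega$-saturation of $\mathfrak{F}^\flat$. Fix a finite $C\subseteq A^\flat\cup B^\flat$ and a unary $\mathcal{L}^1_s[C]$-type $\Sigma(u)$ with $u$ of sort $i\in\{1,\partial\}$, consistent with $\type{Th}_s((\mathfrak{F}^\flat,c)_{c\in C})$. Define the sort-reduced type
\[
\Sigma^{*}(u)=\{\Phi^{*}(u)\midsp\Phi(u)\in\Sigma\}\cup\{{\bf U}_i(u)\}.
\]
I claim $\Sigma^{*}$ is consistent with $\type{Th}((\mathfrak{N},c)_{c\in C})$. Suppose a finite $\Sigma_0\subseteq\Sigma$ were such that $\Sigma_0^{*}\cup\{{\bf U}_i(u)\}$ were inconsistent with $\type{Th}((\mathfrak{N},c)_{c\in C})=\type{Th}(\mathfrak{F}^{*}_C)$; by sorted compactness (Theorem~\ref{sorted compactness}(2)) there is a model of $\type{Th}_s((\mathfrak{F}^\flat,c)_{c\in C})\cup\Sigma_0$ realizing $\Sigma_0$ by some element of sort $i$, whose sort-reduct models $\type{Th}(\mathfrak{F}^{*}_C)\cup\Sigma_0^{*}\cup\{{\bf U}_i(u)\}$, a contradiction. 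By $\omega$-saturation of $\mathfrak{N}$, $\Sigma^{*}$ is realized by some $w\in N$; because $w\models{\bf U}_i(u)$ we have $w\in Z^\flat_{i}$, and Theorem~\ref{sorted compactness}(1) gives that $w$ realizes $\Sigma$ in $(\mathfrak{F}^\flat,c)_{c\in C}$.

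The only real content is the verification in the preceding paragraph that consistency transfers between the sorted type $\Sigma$ and its sort-reduct $\Sigma^{*}\cup\{{\bf U}_i(u)\}$; everything else is bookkeeping on top of the classical single-sorted theorem. I therefore expect the main obstacle to be merely recording carefully that relativisation of quantifiers together with the added atom ${\bf U}_i(u)$ for the free variable does not change satisfiability, which is an immediate consequence of sort-reduction. No ultrapower is really built here: the construction is outsourced to the single-sorted setting, leaving only the translation lemmas to invoke.
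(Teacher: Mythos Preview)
Your proof is correct and follows essentially the same route as the paper: reduce to the unsorted setting via sort-reduction, invoke the classical existence of $\omega$-saturated elementary extensions, and transfer realization of types back using Theorem~\ref{sorted compactness}(1). The only notable differences are that the paper commits specifically to the ultrapower $\prod_U\mathfrak{F}$ over a countably incomplete ultrafilter (a choice exploited downstream in Corollary~\ref{useful coro} and Proposition~\ref{modal equivalence => bisimulation}), and that your explicit inclusion of ${\bf U}_i(u)$ in $\Sigma^*$ to force the realizer into the correct sort is a detail the paper leaves implicit.
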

\begin{proof}
By standard model-theoretic results (\cite{chang-keisler}, Proposition 5.1.1, Theorem 6.1.1),  for every first-order structure ($\mathcal{L}^1$-structure) $\mathfrak{F}$ and any countably incomplete ultrafilter\footnote{As a countably incomplete ultrafilter we may take an ultrafilter over the set of natural numbers that does not contain any singletons (cf \cite{modlog}, Example 2.72).} $U$ over some index set $J$, its ultrapower $\prod_U\mathfrak{F}$ is an elementary $\omega$-saturated extension of $\mathfrak{F}$,  $e:\mathfrak{F}\prec\prod_U\mathfrak{F}$, by the embedding of (the unsorted version of) Corollary \ref{elementary to ultrapower} (see \cite{chang-keisler}, Corollary 4.1.13).

Let $\mathfrak{F}=(A,B,R)$ be a sorted first-order structure, $C\subseteq A\cup B$ and $\Sigma(v)$, with $v\in V_1\cup V_\partial$, a unary type in the expanded language $\mathcal{L}^1_s[C]$ consistent with $\type{Th}_s(\mathfrak{F}_C)$.  We claim that the sort reduct $\Sigma^*=\{\Phi^*(v)\midsp\Phi(v)\in\Sigma\}$ is consistent with the (unsorted) theory $\type{Th}(\mathfrak{F}_C)$. Assuming for the moment that the claim is proved, by $\omega$-saturation of the ultrapower of the $\mathcal{L}^1$-structure $\mathfrak{F}$, $\prod_U\mathfrak{F}_C\models\Sigma^*[S]$ and then by sort reduction $\prod_U\mathfrak{F}_C\models_s\Sigma[S]$, i.e. the type $\Sigma$ in the sorted language $\mathcal{L}^1_s[C]$ is realized in  $\prod_U\mathfrak{F}_C$ by some  valuation $S$. Hence $\prod_U\mathfrak{F}$, regarded as an $\mathcal{L}^1_s$-structure, is an elementary $\omega$-saturated extension of $\mathfrak{F}$.

To prove the claim we made in course of the above argument, recall that we assume that $\Sigma(v)$ is consistent with $\type{Th}_s(\mathfrak{F}_C)$, so that a structure $\mathfrak{N}$ and a valuation $V_N$ exist such that $V_N$ satisfies in $\mathfrak{N}$ every formula in $\Sigma(v)$ and sentence in $\type{Th}_s(\mathfrak{F}_C)$.

If $\Sigma^*$ is not consistent with the theory $\type{Th}(\mathfrak{F}_C)$, let $\Phi\in\mathcal{L}^1[C]$ be such that both $\Phi$ and $\neg\Phi$ are derivable from $\Sigma(v)\cup\type{Th}(\mathfrak{F}_C)$. By compactness, let $\Phi_1^*(v),\ldots,\Phi_n^*(v)\in\Sigma^*$ and $\Theta_1,\ldots,\Theta_k$ be sentences in $\type{Th}(\mathfrak{F}_C)$ such that $\Phi_1^*(v),\ldots,\Phi_n^*(v),\Theta_1,\ldots,\Theta_k\proves\Phi\wedge\neg\Phi$. Since $\type{Th}(\mathfrak{F}_C)$ is a complete theory we may assume that $\Phi\in\type{Th}(\mathfrak{F}_C)$, hence  $\Phi_1^*(v),\ldots,\Phi_n^*(v),\Theta_1,\ldots,\Theta_k\proves\neg\Phi$ and then  $\Phi,\Phi^*_2(v),\ldots,\Phi_n^*(v),\Theta_1,\ldots,\Theta_k\proves\neg\Phi_1^*(v)$. Since $\mathfrak{N}$ with the valuation $V_N$ satisfy each of the formulas on the left it follows $\mathfrak{N}\models\neg\Phi_1^*(v)[V_N]$. By sort-reduction (\cite{enderton}, Lemma 4.3A), $\mathfrak{N}\models_s\neg\Phi_1(v)[V_N]$, contradiction.
\end{proof}

\section{Bisimulations and van Benthem Characterization}

\subsection{Bisimulations}
\begin{defn}[Bisimulation on Sorted Structures]\rm
\label{defn bisimulation}
Let $\mathfrak{F}=(A,I,B,R,S)$, $\mathfrak{F'}=(A',I',B',R',S')$ be frames, where recall that the sorting types of $R,S$ are $(1;i_1,\ldots,i_n)$ and $(\partial;i'_1,\ldots,i'_m)$, respectively, and assume that $\precsim\;\subseteq Z\times Z'$ (where $Z=A\cup B$ and $Z'=A'\cup B'$) is a well-sorted relation (i.e. for $a\in A$, $b\in B$, the set $a\precsim\;=\{b\midsp a\precsim b\}$ is a subset of $A'$ and similarly $b\precsim\;\subseteq B'$). Then the relation $\precsim$ is a {\em simulation} iff
\begin{enumerate}
\item If $a\precsim a'$ then
    \begin{itemize}
    \item[-] if $aIb$, then $b\precsim b'$ for some $b'\in B'$ such that $a'I'b'$
    \item[-] if $aRu_1\cdots u_n$, then $a'R'u'_1\cdots u'_n$ for some $u'_j$ such that $u_j\precsim u'_j$
    \end{itemize}
\item If $b\precsim b'$ then
    \begin{itemize}
    \item[-] if $aIb$, then $a\precsim a'$ for some $a'\in A'$ such that $a'I'b'$
    \item[-] if $bSv_1\cdots v_m$, then $b'S'v'_1\cdots v'_m$ for some $v'_j$ such that $v_j\precsim v'_j$
    \end{itemize}
\end{enumerate}
A relation $\precsim$ is a {\em bisimulation} if both $\precsim$ and its inverse $\precsim^{-1}$ are simulations. We use the notation $\sim$ for bisimulations. If $\sim$ is a bisimulation for the frames, we write $\mathfrak{F}\sim\mathfrak{F'}$.
\telos
\end{defn}

A  relation $\sim$ is a bisimulation of models $\mathfrak{M}=(\mathfrak{F},V),\mathfrak{M'}=(\mathfrak{F'},V')$ iff
\begin{enumerate}
  \item $\mathfrak{F}\sim\mathfrak{F'}$ and
  \item for any propositional variable $P_i$ of the first sort, if $a\in V(P_i)$ and $a\sim a'$, then $a'\in V'(P_i)$
  \item for any propositional variable $Q_i$ of the second sort, if $b\in V(Q_i)$ and $b\sim b'$, then $b'\in V'(Q_i)$
\end{enumerate}
If $\sim$ is a bisimulation for the models, we write $\mathfrak{M}\sim\mathfrak{M'}$ and we use $\mathfrak{M},w\sim\mathfrak{M'},w'$ when $w\sim w'$ are points of either (but the same) sort.

\begin{prop}\rm
\label{modal=>invariant}
Sorted modal formulas  are invariant under bisimulation. In other words, if $\mathfrak{M},a\sim\mathfrak{M'},a'$ (resp. $\mathfrak{M},b\sim\mathfrak{M'},b'$), then $\mathfrak{M}\models\alpha(u)[u:=a]$ iff $\mathfrak{M'}\models\alpha(u)[u:=a']$ (resp. $\mathfrak{M}\vmodels\beta(v)[v:=b]$ iff $\mathfrak{M'}\vmodels\beta(v)[v:=b']$).
\end{prop}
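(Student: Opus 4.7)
The plan is to proceed by simultaneous structural induction on the sorted modal formulas $\alpha \in L_1$ and $\beta \in L_\partial$, with the symmetry between $\sim$ and $\sim^{-1}$ (which is itself a simulation, since $\sim$ is a bisimulation) driving the iff at each stage. Throughout, I rely on the well-sortedness of $\sim$ so that matching points produced by the simulation clauses automatically live in the correct sort.

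For the base case I use conditions (2) and (3) in the definition of model bisimulation directly: if $a \sim a'$ and $a \in V(P_i)$ then $a' \in V'(P_i)$, and conversely applying the same clause to $\sim^{-1}$ yields $a \in V(P_i)$ from $a' \in V'(P_i)$; the same argument on sort $\partial$ handles $Q_i$. The Boolean cases (negation and conjunction) are routine: the satisfaction clauses commute with the connectives, so the inductive hypothesis transfers across immediately, separately for each sort.

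The interesting steps are the modal ones. For $\bbox\beta$, assume $a \sim a'$ and $\mathfrak{M}, a \models \bbox\beta$; pick any $b'$ with $a'I'b'$. Applying the \emph{back} condition (i.e., the forth condition for $\sim^{-1}$) to the pair $a \sim a'$ and the $I'$-successor $b'$, I obtain some $b \in B$ with $aIb$ and $b \sim b'$; the inductive hypothesis on $\beta$ then gives $\mathfrak{M'}, b' \vmodels \beta$. The converse direction uses the forth condition on $I$ in the same way, and the case of $\Box\alpha$ is symmetric, with sorts interchanged. For $\diamondvert(\overline{\theta})$, suppose $a \sim a'$ and $\mathfrak{M}, a \models \diamondvert(\overline{\theta})$, witnessed by $u_1,\dots,u_n$ with $aRu_1\cdots u_n$ and $u_j \zmodels \theta_j$ for each $j$. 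The forth condition on $R$ produces $u'_1,\dots,u'_n$ with $a'R'u'_1\cdots u'_n$ and $u_j \sim u'_j$ for each $j$, each $u'_j$ in the sort dictated by $i_j$; the induction hypothesis applied componentwise then yields $u'_j \zmodels \theta_j$, so $\mathfrak{M'}, a' \models \diamondvert(\overline{\theta})$. The converse uses the analogous simulation clause for $\sim^{-1}$, and the treatment of $\diamondminus(\overline{\theta'})$ with $S$ is parallel, starting from a point of sort $\partial$.

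The only real obstacle is bookkeeping: making sure that at each modal case one invokes the right simulation clause (forth through $\sim$ for one direction, forth through $\sim^{-1}$ for the other), and that the sorts of the matched points $b, b', u_j, u'_j, v_j, v'_j$ line up with the sorting type of the relation in play so that the correct satisfaction relation ($\models$ or $\vmodels$) is applied under the induction hypothesis. There is no novel difficulty beyond that already encountered in the single-sorted van Benthem invariance proof.
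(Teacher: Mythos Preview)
Your proof is correct and follows exactly the approach the paper takes: the paper's own argument is a brief sketch by structural induction, invoking the bisimulation clauses for the base case and the modal operators, and the inductive hypothesis for the Boolean connectives. Your version simply spells out the details (including the forth/back bookkeeping) that the paper leaves implicit; the only cosmetic difference is that the primitive Boolean connectives in the paper's language are $\neg$ and $\ra$ rather than $\neg$ and $\wedge$, but this changes nothing in the argument.
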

\begin{proof}
  By structural induction, observing that the argument for the base case is built into the definition of bisimulations, while for negations and implications it reduces to that for the subsentences and the induction hypothesis is used, while for modal operators the corresponding clauses in the definition of bisimulations allow directly the use of the inductive hypothesis on subsentences.
\end{proof}

For models $\mathfrak{M,M'}$, points $a\in A, a'\in A'$, respectively are {\em modally equivalent} iff for any $\alpha\in_1 L_\tau$, $\mathfrak{M},a\models_s\alpha$ iff $\mathfrak{M'},a'\models_s\alpha$. In symbols $\mathfrak{M},a\stackrel{\Box}{\leftrightsquigarrow}\mathfrak{M'},a'$. Similarly for points $b\in B, b'\in B'$.

A kind of converse of Proposition \ref{modal=>invariant} is provided below.
\begin{prop}\rm
\label{modal equivalence => bisimulation}
Assume $\mathfrak{M},a\stackrel{\Box}{\leftrightsquigarrow}\mathfrak{M'},a'$ and let $U$ be a countably incomplete ultrafilter over some index set $J$. Then $\prod_D\mathfrak{M},f_{a,U}\stackrel{\Box}{\leftrightsquigarrow}\prod_D\mathfrak{M'},f_{a',U}$ and the relation of modal equivalence on the ultrapowers is  a bisimulation.
\end{prop}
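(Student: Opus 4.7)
The plan is to combine two ingredients. First, lift modal equivalence from $(a,a')$ in the base models up to $(f_{a,U}, f_{a',U})$ in the ultrapowers. Second, carry out a Hennessy--Milner style argument showing that modal equivalence on the ultrapowers is itself a bisimulation. The second half will lean on $\omega$-saturation of $\prod_U \mathfrak{M}$ and $\prod_U \mathfrak{M'}$, which is available because $U$ is countably incomplete---exactly the ingredient invoked in the proof of Theorem \ref{omega-saturated thm}.

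For the first ingredient I would use Corollary \ref{elementary to ultrapower} together with Proposition \ref{std trans}: the diagonal embedding is elementary in the sorted first-order language, and modal satisfaction at $a$ (respectively at $b$) is expressible by $\stx{u}{\alpha}[u:=a]$ (respectively $\sty{v}{\beta}[v:=b]$). Hence $\mathfrak{M},a$ and $\prod_U\mathfrak{M},f_{a,U}$ satisfy the same sort-$1$ modal formulas, and analogously on the primed side, so the hypothesized modal equivalence transfers upstairs.

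The main step is to verify that modal equivalence restricted to same-sort ultrapower-pairs meets every clause of Definition \ref{defn bisimulation}. Atomic preservation is immediate because each propositional letter is itself a modal formula. For each back/forth clause I would proceed uniformly by forming a first-order type and realizing it via $\omega$-saturation. For instance, given $a \stackrel{\Box}{\leftrightsquigarrow} a'$ and $aIb$, consider the unary sort-$\partial$ type
\[
\Sigma(v) \;=\; \{{\bf I}(c_{a'}, v)\} \cup \{\sty{v}{\beta} : b \vmodels \beta\}.
\]
Finite satisfiability follows because any finite conjunction $\beta_0$ of its members is still satisfied by $b$, so $a \models \blackdiamond\beta_0$; by modal equivalence $a' \models \blackdiamond\beta_0$, yielding a witness in $\prod_U\mathfrak{M'}$. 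Saturation then realizes $\Sigma$ by some $b'$ with $a'I'b'$, and maximality (replacing $\beta$ by $\neg\beta$ whenever $b \not\vmodels \beta$) forces $b \stackrel{\Box}{\leftrightsquigarrow} b'$. The back/forth clauses starting from a sort-$\partial$ point are symmetric, using $\Diamond$ in place of $\blackdiamond$.

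The main obstacle I expect is the back/forth clause for the polyadic relations $R$ and $S$, where one needs a full tuple $u'_1,\ldots,u'_n$ with $a'R'u'_1\cdots u'_n$ jointly together with $u_j \stackrel{\Box}{\leftrightsquigarrow} u'_j$ for each $j$, whereas $\omega$-saturation as formulated before Theorem \ref{omega-saturated thm} produces only unary realizations. I would handle this by the standard stepwise procedure: realize $u'_1$ first via the unary type containing $\stx{v_1}{\chi_1}$ (or $\sty{v_1}{\chi_1}$, as the sort dictates) for every $\chi_1$ with $u_1 \zmodels \chi_1$, together with each existentially closed statement $\exists v_2\cdots v_n\bigl({\bf R}(c_{a'},v_1,\ldots,v_n)\wedge\bigwedge_{j\geq 2}\stx{v_j}{\chi_j}\bigr)$ for finite families $\chi_j$ with $u_j \zmodels \chi_j$. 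Finite satisfiability of this type is precisely the $\diamondvert$ analogue of the $\blackdiamond$ argument, since $a \models \diamondvert(\chi_1,\ldots,\chi_n)$ transfers to $a'$ by modal equivalence. Adjoin a constant $c_{u'_1}$ and iterate for $u'_2,\ldots,u'_n$; after $n$ rounds the existential prefix has vanished, delivering $a'R'u'_1\cdots u'_n$ together with $u_j \stackrel{\Box}{\leftrightsquigarrow} u'_j$ for every $j$. The polyadic clause for $S$ runs identically, with $\diamondminus$ in place of $\diamondvert$.
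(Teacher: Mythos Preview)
Your proposal is correct and follows the same two-step strategy as the paper: lift modal equivalence to the ultrapowers via the standard translation and the elementary diagonal embedding, then run a Hennessy--Milner argument using $\omega$-saturation of the ultrapowers. The paper's own proof is terser---it simply cites \cite{modlog}, Proposition~2.54 and Theorem~2.65, for the second step---whereas you spell out the details, including the stepwise realization needed for the polyadic $R$- and $S$-clauses, which the cited reference does not treat explicitly.
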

\begin{proof}
By Proposition \ref{std trans} we have $\mathfrak{M},a\stackrel{\Box}{\leftrightsquigarrow}\prod_D\mathfrak{M},f_{a,D}$ and so the first hypothesis implies that $\prod_D\mathfrak{M},f_{a,U}\stackrel{\Box}{\leftrightsquigarrow}\prod_D\mathfrak{M'},f_{a',U}$.

From the second hypothesis and Theorem \ref{omega-saturated thm} it is obtained that $\prod_D\mathfrak{M}$, $\prod_D\mathfrak{M'}$ are $\omega$-saturated and the claim is that this implies that modal equivalence is a bisimulation. The proof of this claim is the same as the corresponding proof in the unsorted case (cf. \cite{modlog}, ch. 2, Proposition 2.54 and Theorem 2.65).
\end{proof}

If $\Phi=\Phi(u)\in\mathcal{L}$ has only the displayed variable $u\in V_1$ free (i.e. it is a unary type) and it holds that $\Phi\models \stx{u}{\alpha}$, for some $\alpha$ (of sort 1) in the sorted modal language, then we say that $\stx{u}{\alpha}$ is a {\em modal 1-consequence} of $\Phi$. Similarly, if $\Psi(v)\models\sty{v}{\beta}$ then $\sty{v}{\beta}$ is a {\em modal $\partial$-consequence} of $\Psi$. Let $m^1_1(\Phi)$, $m^1_\partial(\Psi)$ be the sets of 1- and $\partial$-consequences of $\Phi,\Psi$, respectively.

\begin{lemma}\rm
\label{inv bisim lemma}
Let $\Phi(u)$ be a sorted first-order formula in one free variable $u\in V_1$ and let $m^1_1(\Phi)$ be the set of its modal 1-consequences. If $\Phi$ is invariant under bisimulation, then $m^1_1(\Phi)\models_s\Phi$. Similarly, for a bisimulation invariant formula $\Psi(v)\in\mathcal{L}$, with $v\in V_\partial$, $m^1_\partial(\Psi)\models_s\Psi$.
\end{lemma}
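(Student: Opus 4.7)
The plan is to follow the standard pattern of the van Benthem characterization argument, adapted to the two-sorted setting developed in the previous sections. Fix a pointed model $(\mathfrak{M}, a)$, with $\mathfrak{M} = (\mathfrak{F}, V)$ and $a \in A = Z_1$, satisfying $\mathfrak{M} \models_s m^1_1(\Phi)[u := a]$; the goal is to show $\mathfrak{M} \models_s \Phi[u:=a]$. The core idea is to build a second pointed model $(\mathfrak{N}, a')$ with $\mathfrak{N} \models_s \Phi[u:=a']$ that is modally equivalent to $(\mathfrak{M}, a)$, and then to transfer $\Phi$ backward from $(\mathfrak{N}, a')$ to $(\mathfrak{M}, a)$ by passing to $\omega$-saturated ultrapower extensions on which modal equivalence becomes a bisimulation.

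For the construction of $(\mathfrak{N}, a')$, let $T$ be the complete modal $1$-theory of $(\mathfrak{M}, a)$, namely the set of standard translations $\stx{u}{\alpha}$ with $\mathfrak{M} \models_s \stx{u}{\alpha}[u := a]$. I claim that $T \cup \{\Phi(u)\}$ is satisfiable. If it were not, the sorted compactness theorem (Theorem~\ref{sorted compactness}) would furnish finitely many $\stx{u}{\alpha_1}, \ldots, \stx{u}{\alpha_n} \in T$ with $\Phi(u) \models_s \neg\bigwedge_i \stx{u}{\alpha_i}$, equivalently $\Phi(u) \models_s \stx{u}{\neg \bigwedge_i \alpha_i}$. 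Then $\stx{u}{\neg \bigwedge_i \alpha_i}$ would lie in $m^1_1(\Phi)$ and therefore hold at $a$ in $\mathfrak{M}$, contradicting the fact that each $\stx{u}{\alpha_i}$ holds at $a$. A witness $(\mathfrak{N}, a')$ thus exists; because $T$ is negation-complete at the modal level (for every sort-$1$ modal $\alpha$, either $\stx{u}{\alpha}$ or $\stx{u}{\neg\alpha}$ lies in $T$), any model of $T$ is modally equivalent to $(\mathfrak{M}, a)$, so $(\mathfrak{M}, a) \stackrel{\Box}{\leftrightsquigarrow} (\mathfrak{N}, a')$.

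Next, I would pick a countably incomplete ultrafilter $U$ over some index set and pass to the ultrapowers $\prod_U \mathfrak{M}$ and $\prod_U \mathfrak{N}$ with distinguished points $f_{a,U}$ and $f_{a', U}$. Proposition~\ref{modal equivalence => bisimulation} then delivers two facts in one stroke: the ultrapowers remain modally equivalent at the distinguished points, and, being $\omega$-saturated (Theorem~\ref{omega-saturated thm}), their modal equivalence relation is in fact a bisimulation in the sense of Definition~\ref{defn bisimulation}. Hence $(\prod_U \mathfrak{M}, f_{a,U}) \sim (\prod_U \mathfrak{N}, f_{a',U})$.

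Finally, $\Phi$ is transferred back. Since $\mathfrak{N} \models_s \Phi[u := a']$, Corollary~\ref{useful coro} yields $\prod_U \mathfrak{N} \models_s \Phi[u := f_{a', U}]$. By the assumed bisimulation invariance of $\Phi$ this gives $\prod_U \mathfrak{M} \models_s \Phi[u := f_{a, U}]$, and a second application of Corollary~\ref{useful coro} delivers $\mathfrak{M} \models_s \Phi[u := a]$, as desired. The argument for a $\Psi(v)$ with $v \in V_\partial$ is entirely symmetric, interchanging the roles of the two sorts and replacing $\stx{\cdot}{\cdot}$ by $\sty{\cdot}{\cdot}$. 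The only step carrying real content is the Hennessy--Milner-style promotion of modal equivalence to a bisimulation on $\omega$-saturated models, encapsulated in Proposition~\ref{modal equivalence => bisimulation}; everything else is bookkeeping on top of the sorted compactness and ultrapower machinery already developed in Section~4, and I expect no new obstacles beyond ensuring that the sort-typing of $I$, $R$, $S$ is respected uniformly, which is already built into the cited results.
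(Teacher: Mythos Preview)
Your proposal is correct and follows essentially the same route as the paper's own proof: both set up a model of $\Phi$ together with the full modal $1$-theory of $(\mathfrak{M},a)$ via sorted compactness, establish modal equivalence of the two pointed models, pass to $\omega$-saturated ultrapowers where modal equivalence becomes a bisimulation (Proposition~\ref{modal equivalence => bisimulation}), and then transport $\Phi$ back using Corollary~\ref{useful coro}. The only differences are cosmetic (your $T$ is the paper's $m^1_1(a)$, your $\mathfrak{N}$ its $\mathfrak{M}'$).
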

\begin{proof}
The proof is again similar to that for the unsorted case, see for example the proof in Theorem 2.68 of \cite{modlog}. We provide some details.

Let $\mathfrak{M}=((A,I,B,R,S),V)$, $a\in A$, assume \mbox{$\mathfrak{M}\!\models_s\! m^1_1(\Phi)[u:=a]$}  and observe that $\Phi\cup m^1_1(a)$ is consistent. Otherwise, by compactness of sorted FOL \cite{enderton}, we obtain that $\models_s\Phi\ra\neg\bigwedge m^1_0(a)$, for some finite $m^1_0(a)\subseteq m^1_1(a)$. Hence, $\neg\bigwedge m^1_0(a)\in m^1_1(\Phi)$ which implies that $\mathfrak{M}\models_s\neg\bigwedge m^1_0(a)$. This is in contradiction with the fact that $m^1_0(a)\subseteq m^1_1(a)$ and $\mathfrak{M}\models_s \stx{u}{\alpha}[u:=a]$ for all $\stx{u}{\alpha}\in m^1_1(a)$.

By consistency of  $\Phi(u)\cup m^1_1(a)$, let  $\mathfrak{M}'=((A',I',B',R',S'),V')$ be a model and $a'\in A'$ such that $\mathfrak{M}'\models_s \{\Phi(u)\}\cup m^1_1(a)[u:=a']$. Then for any sentence $\alpha$ of the first sort in the sorted modal language, $\mathfrak{M},a\models_s\alpha$ iff $\mathfrak{M}',a'\models_s\alpha$. i.e. $a,a'$ are modally equivalent. This is because if $\mathfrak{M},a\models_s\alpha$, then $\stx{u}{\alpha}\in m^1_1(a)$ and therefore by $\mathfrak{M}'\models_s \{\Phi(u)\}\cup m^1_1(a)[u:=a']$ and Proposition \ref{std trans} it follows that $\mathfrak{M}',a'\models_s\alpha$. Conversely, if $\mathfrak{M}',a'\models_s\alpha$, then it must be that $\mathfrak{M},a\models_s\alpha$ for, if not, then $\mathfrak{M},a\models_s\neg\alpha$ and this implies $\mathfrak{M}',a'\models_s\neg\alpha$ which is a contradiction.

To obtain $\mathfrak{M}\models_s\Phi(u)[u:=a]$ from $\mathfrak{M}'\models_s\Phi(u)[u:=a']$, let $U$ be a countably incomplete ultrafilter over some index set $J$. By Proposition \ref{modal equivalence => bisimulation} and Corollaries \ref{elementary to ultrapower} and \ref{useful coro}  we obtain a sequence of implications:
\begin{tabbing}
$\mathfrak{M}'\models_s\Phi(u)[u:=a']$\hskip2mm\= $\Lra$\hskip2mm\= $\prod_U\mathfrak{M'}\models_s\Phi(u)[u:=f'_{a,U}]$\\
\>$\Lra$\>$\prod_U\mathfrak{M}\models_s\Phi(u)[u:=f_{a,U}]$\\
\>$\Lra$\> $\mathfrak{M}\models_s\Phi(u)[u:=a]$
\end{tabbing}
This establishes that $m^1_1(\Phi)\models_s\Phi$.
The argument for a formula $\Psi(v)\in\mathcal{L}^1$, with $v\in V_\partial$ is similar.
\end{proof}

\subsection{Van Benthem Characterization}
Fix a similarity type $\tau$. Let $\Lambda_\tau$ be the language of normal lattice expansions of type $\tau$, $L_\tau=(L_1,L_\partial)_\tau$ be the sorted modal language of type $\tau$ and $\mathcal{L}^1_{s,\tau}$ the sorted first-order language of the same type $\tau$. All the necessary work to lift van Benthem's characterization theorem to sorted modal logic has been presented and we state the result.

\begin{thm}\rm
\label{sorted vb}
Let $\Phi(u)\in\mathcal{L}^1_{s,\tau}$ be a formula in one free variable in the sorted first-order language $\mathcal{L}^1_{s,\tau}$, with $u\in V_1$. Then $\Phi$ is equivalent to the translation ST$_u(\alpha)$ of a modal formula $\alpha\in_1 L_\tau$ iff $\Phi$ is bisimulation invariant. Similarly for a formula $\Psi(v)$ with $v\in V_\partial$.
\end{thm}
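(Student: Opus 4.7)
The plan is to mimic the classical van Benthem argument, where the hard direction is obtained by combining bisimulation invariance with compactness, exactly as in the single-sorted case, with the preparatory work already consolidated in Lemma \ref{inv bisim lemma} and Proposition \ref{modal equivalence => bisimulation}. I will treat the case $\Phi(u)$ with $u\in V_1$; the argument for $\Psi(v)$ with $v\in V_\partial$ is symmetric, using the $\partial$-side of each of the preparatory results.

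For the easy direction, assume $\Phi\equiv\stx{u}{\alpha}$ for some $\alpha\in_1 L_\tau$. By Proposition \ref{std trans}, $\mathfrak{M}\models_s\stx{u}{\alpha}[u:=a]$ iff $\mathfrak{M},a\models\alpha$, and by Proposition \ref{modal=>invariant} truth of $\alpha$ at a point is preserved along bisimulations of models. Hence $\Phi$ is itself bisimulation invariant.

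For the hard direction, assume $\Phi(u)$ is bisimulation invariant and consider its set $m^1_1(\Phi)$ of modal $1$-consequences, i.e.\ all $\stx{u}{\alpha}$ such that $\Phi\models_s\stx{u}{\alpha}$. By Lemma \ref{inv bisim lemma}, bisimulation invariance gives $m^1_1(\Phi)\models_s\Phi$. Now by sorted compactness (Theorem \ref{sorted compactness}) there exist finitely many modal sentences $\alpha_1,\dots,\alpha_k\in_1 L_\tau$ such that $\{\stx{u}{\alpha_1},\dots,\stx{u}{\alpha_k}\}\models_s\Phi$. Set $\alpha=\alpha_1\wedge\cdots\wedge\alpha_k$. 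Since the standard translation commutes with conjunction, $\stx{u}{\alpha}\equiv\bigwedge_{i=1}^k\stx{u}{\alpha_i}$, so $\stx{u}{\alpha}\models_s\Phi$. Conversely, each $\stx{u}{\alpha_i}$ belongs to $m^1_1(\Phi)$, whence $\Phi\models_s\stx{u}{\alpha_i}$ for every $i$ and therefore $\Phi\models_s\stx{u}{\alpha}$. Thus $\Phi\equiv\stx{u}{\alpha}$, as required.

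The only step requiring genuine sorted content is the passage from $\omega$-saturated elementary extensions to bisimulations used inside Lemma \ref{inv bisim lemma}, and this has already been handled through Theorem \ref{omega-saturated thm} and Proposition \ref{modal equivalence => bisimulation}. The remaining ingredients, most notably compactness and closure of the standard translation under conjunction, transfer verbatim from the single-sorted setting via sort-reduction (Theorem \ref{sorted compactness}), so no genuine obstacle remains; the main point of interest is simply that the two-sorted bookkeeping does not interfere with the classical argument, which is exactly what the preceding sections were designed to guarantee.
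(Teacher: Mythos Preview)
Your proposal is correct and follows essentially the same route as the paper's own proof: the easy direction via Proposition~\ref{modal=>invariant}, and the hard direction via Lemma~\ref{inv bisim lemma} followed by sorted compactness (Theorem~\ref{sorted compactness}) to extract a finite conjunction $\alpha=\alpha_1\wedge\cdots\wedge\alpha_k$ with $\Phi\equiv\stx{u}{\alpha}$. Your write-up is in fact slightly more explicit than the paper's, spelling out the use of Proposition~\ref{std trans} and the commutation of the standard translation with conjunction.
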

\begin{proof}
If $\Phi$ is equivalent to the translation ST$_u(\alpha)$ of a modal formula \mbox{$\alpha\in_1 L_\tau$,} then $\Phi$ is bisimulation invariant by Proposition \ref{modal=>invariant}. For the converse, by Lemma \ref{inv bisim lemma} we obtain $m^1_1(\Phi)\models_s\Phi$.
By compactness for sorted FOL (Theorem \ref{sorted compactness}), let $\mu_1(\Phi)=\{\stx{u}{\alpha_1},\ldots,\stx{u}{\alpha_n}\} \subseteq m^1_1(\Phi)$ be a finite subset of $m^1_1(\Phi)$ such that $\mu_1(\Phi)\models_s\Phi$. Then $\models_s\Phi\leftrightarrow\bigwedge\mu_1(\Phi)$, hence $\models_s\Phi\leftrightarrow\stx{u}{\eta}$, where we set $\eta=\alpha_1\wedge\cdots\wedge\alpha_n$.
\end{proof}

It remains to adapt the result to the case of the logics of normal lattice expansions of similarity type $\tau$.

\begin{defn}\rm
$\Phi(u)$, with $u\in V_1$, is {\em stable} if and only if  it is  equivalent to the formula $\forall^\partial v\;\exists^1 z\;({\bf I}(u,v)\lra{\bf I}(z,v)\wedge\Phi(z))$.
\end{defn}

\begin{thm}[van Benthem Characterization]\rm
\label{van Benthem}
Fix a similarity type $\tau$.
Let $\Phi(u)\in\mathcal{L}^1_{s,\tau}$ be a formula with one free variable  in the sorted first-order language $\mathcal{L}^1_{s,\tau}$, with $u\in V_1$. Then $\Phi$ is equivalent to the translation ST$^\bullet_\pi(\varphi)$, for some permutation $\pi:\omega\ra\omega$, of a sentence in the language of lattice expansions of similarity type $\tau$ iff $\Phi$ is bisimulation invariant and stable.
\end{thm}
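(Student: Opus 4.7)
The plan is to bootstrap from two already-established results: the sorted van Benthem theorem for modal logic (Theorem \ref{sorted vb}) and the characterization of the range of the modal translation (Corollary \ref{modal characterization}). The connecting observation is that a sorted modal formula $\alpha \in L_1$ lies in the range of $\mbox{T}^\bullet_\pi$ precisely when $\alpha \equiv \bbox\Diamond\alpha$, and under the standard translation this modal stability condition reads exactly as the first-order stability clause on $\mbox{ST}_u(\alpha)$.

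For the easy direction, suppose $\Phi(u) \equiv \mbox{ST}^\bullet_\pi(\varphi) = \mbox{ST}_u(\mbox{T}^\bullet_\pi(\varphi))$. By Corollary \ref{modal characterization} the formula $\mbox{T}^\bullet_\pi(\varphi)$ is equivalent to some $\bbox\beta$, which is in turn equivalent to $\bbox\Diamond\bbox\beta$. Bisimulation invariance of $\Phi$ is then immediate from Proposition \ref{modal=>invariant}. Unfolding the standard translation of $\bbox\Diamond\bbox\beta$ gives
\[
\mbox{ST}_u(\bbox\Diamond\bbox\beta) \;\equiv\; \forall^\partial v\;({\bf I}(u,v)\lra\exists^1 z\;({\bf I}(z,v)\wedge \mbox{ST}_z(\bbox\beta))),
\]
which is exactly the stability condition on $\Phi(u) \equiv \mbox{ST}_u(\bbox\beta)$.

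For the converse, assume $\Phi$ is bisimulation invariant and stable. Theorem \ref{sorted vb} produces a modal formula $\alpha \in L_1$ with $\Phi \equiv \mbox{ST}_u(\alpha)$. The goal reduces to showing $\alpha \equiv \bbox\Diamond\alpha$: by Corollary \ref{modal characterization} this supplies a permutation $\pi$ and a sentence $\varphi \in \Lambda_\tau$ with $\alpha \equiv \mbox{T}^\bullet_\pi(\varphi)$, whence $\Phi \equiv \mbox{ST}^\bullet_\pi(\varphi)$, as required. Computing,
\[
\mbox{ST}_u(\bbox\Diamond\alpha) \;\equiv\; \forall^\partial v\;({\bf I}(u,v)\lra\exists^1 z\;({\bf I}(z,v)\wedge \mbox{ST}_z(\alpha))),
\]
and substituting $\mbox{ST}_z(\alpha) \equiv \Phi(z)$ turns the right-hand side into the stability statement for $\Phi$, which by hypothesis is equivalent to $\Phi(u) \equiv \mbox{ST}_u(\alpha)$. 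So $\mbox{ST}_u(\alpha) \equiv \mbox{ST}_u(\bbox\Diamond\alpha)$ on every model of the intended class, and by faithfulness of the standard translation (Proposition \ref{std trans}) we conclude $\alpha \equiv \bbox\Diamond\alpha$.

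The main obstacle is that last passage from semantic equivalence of standard translations to equivalence of the underlying modal formulas: one needs the class of frames quantified over to be rich enough to separate inequivalent modal formulas. Here the intended class is that of sorted frames meeting the seriality conditions \eqref{D-cond} and the section-stability requirement of Proposition \ref{dist from stability} --- precisely the class over which Theorem \ref{properties of trans} established $\mbox{T}^\bullet_\pi$ as full and faithful. The argument for a formula $\Psi(v)$ with $v \in V_\partial$ is strictly dual, using the co-translation $\mbox{T}^\circ_\pi$ and the second clause of Corollary \ref{modal characterization}.
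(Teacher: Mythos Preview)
Your proof is correct and follows essentially the same approach as the paper, which simply states that the claim follows by combining Theorem \ref{sorted vb} with Corollary \ref{modal characterization}. You have spelled out the bridge that the paper leaves implicit, namely that under the standard translation the modal condition $\alpha\equiv\bbox\Diamond\alpha$ reads precisely as the first-order stability clause on $\mbox{ST}_u(\alpha)$, and that this is what allows Corollary \ref{modal characterization} to engage once Theorem \ref{sorted vb} has produced $\alpha$.
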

\begin{proof}
The claim of the theorem follows immediately by combining the characterization result for sorted modal logic of similarity type $\tau$ (Theorem \ref{sorted vb}) and Corollary \ref{modal characterization} (a consequence of Theorem \ref{properties of trans}).
\end{proof}

\section{Conclusions}
This article is part of a project of employing modal methods and lifting results proved for modal logic to the case of non-distributive propositional logics, i.e. the logics of normal lattice expansions of some similarity type $\tau$. An intermediate step in carrying out the proof has been the lifting of the van Benthem characterization result to the case of sorted modal logic, which is unproblematic, though burdened with the usual technicalities one needs to deal with when moving from unsorted to sorted domains. Its core idea, on which it relies heavily, is the modal representation of normal lattice expansions developed in \cite{duality2021,sdl-exp,pnsds,discres} and the possibility to provide fully abstract modal translations of the languages of logics for normal lattice expansions into sorted modal logic, an idea first explored in \cite{pll7,redm}.

\bibliographystyle{plain}

\end{document}